\newcommand{\Q}{\mathbb{Q}}
\newcommand{\R}{\mathbb{R}}
\newcommand{\Z}{\mathbb{Z}}
\renewcommand{\O}{\mathcal{O}}
\DeclareMathOperator{\rc}{rc}
\newcommand{\unitvec}{\mathbbm{e}}
\newcommand{\zerovec}{\mathbb{O}}
\newcommand{\ATSP}{\mathrm{ATSP}}
\newcommand{\STSP}{\mathrm{STSP}}
\newcommand{\CONN}{\mathrm{CONN}}
\newcommand{\BRANCH}{\mathrm{BRANCH}}
\newcommand{\ARB}{\mathrm{ARB}}
\newcommand{\SPT}{\mathrm{SPT}}
\newcommand{\FORESTS}{\mathrm{FORESTS}}
\newcommand{\DIFF}{\mathrm{DIFF}}
\newcommand{\PERM}{\mathrm{PERM}}
\newcommand{\EVEN}{\mathrm{EVEN}}
\newcommand{\ODD}{\mathrm{ODD}}
\newcommand{\TJOINS}{T\text{-}\mathrm{JOINS}}
\newcommand{\Sn}{\mathcal{S}_n}
\DeclareMathOperator{\aff}{aff}
\DeclareMathOperator{\conv}{conv}
\DeclareMathOperator{\ind}{ind}
\DeclareDocumentCommand\setdef{mo}{\ensuremath{\left\{#1\IfNoValueTF{#2}{}{:#2}\right\}}}
\spnewtheorem{customexample}{Example}{\bfseries}{\itshape}
\begin{document}

\title{Lower Bounds on the Sizes of Integer Programs Without Additional Variables\thanks{Part of this research was
funded by the German Research Foundation (DFG): ``Extended Formulations in Combinatorial Optimization'' (KA 1616/4-1)}}

\author{Volker Kaibel \and Stefan Weltge}

\institute{Otto-von-Guericke-Universit\"at~Magdeburg, Germany\\
    \email{\{kaibel,weltge\}@ovgu.de}}

\maketitle

\begin{abstract}
    Let $ X $ be the set of integer points in some polyhedron. We investigate the smallest number of facets of any
    polyhedron whose set of integer points is $ X $. This quantity, which we call the \emph{relaxation
    complexity} of $ X $, corresponds to the smallest number of linear inequalities of any integer program having $ X $
    as the set of feasible solutions that does not use auxiliary variables. We show that the use of auxiliary variables
    is essential for constructing polynomial size integer programming formulations in many relevant cases. In
    particular, we provide asymptotically tight exponential lower bounds on the relaxation complexity of the integer
    points of several well-known combinatorial polytopes, including the traveling salesman polytope and the spanning
    tree polytope.
    In addition to the material in the extended abstract \cite{KaibelW14} we include
    omitted proofs, supporting figures, discussions about properties of coefficients in such formulations, and facts
    about the complexity of formulations in more general settings.

    \keywords{integer programming \and relaxations \and auxiliary variables \and tsp}
    \end{abstract}

\makeatletter{}\section{Introduction}

For many combinatorial optimization problems, their set of feasible solutions is a certain set of subsets of a finite
ground set $ E $ and there exists a vector $ c \in \R^E $ such that the objective value of each feasible set $ F
\subseteq E $ is equal to $ \sum_{e \in F} c_e $.
A central paradigm in  combinatorial optimization is to identify each feasible subset $ F $ with its
\emph{characteristic vector} $ \chi(F) \in \{0,1\}^E $, where $ \chi(F)_e = 1 \iff e \in F $, and to consider the
(equivalent) problem of maximizing or minimizing the linear function $ x \mapsto \langle c,x \rangle $ over these
vectors.
In order to treat this problem algorithmically, one needs an algebraic description of the set $ X := \{\chi(F) : F \subseteq E \text{ feasible}\} $. The standard approach that has been followed extremely successfully for many problems is to use a system of linear
inequalities $ Ax \leq b $ with 
\begin{equation}
    \label{eq:description_relaxation}
    X = \setdef{ x \in \Z^E }[ Ax \leq b ]\,,
\end{equation}
i.e., an \emph{integer linear programming} (\emph{ILP}) formulation.  
A most prominent example of this approach is provided by the traveling salesman problem. 
Let $ K_n = (V_n,E_n) $ be the undirected complete graph on $ n $ nodes and $ \STSP_n $ the set of characteristic
vectors of hamiltonian cycles in $ K_n $.
One ILP-formulation for $ X = \STSP_n $ of type~\eqref{eq:description_relaxation}, which has established itself as
\emph{the} starting point for solving the traveling salesman problem, is the so-called \emph{subtour elimination}
formulation
\begin{align}
    \nonumber
    \STSP_n = \Big\{ x \in \{0,1\}^{E_n} :
    x(\delta(S)) & \geq 2 \quad \forall \, \emptyset \neq S \subsetneq V_n \\
    \label{eq:subtour-relaxation}
    x(\delta(v)) & = 2 \quad \forall \, v \in V_n \quad \quad \Big\}.
\end{align}
This description uses exponentially many (in $ n $) linear inequalities, but nevertheless, it can be used computationally efficiently (both with respect to theoretical and to practical aspects), because the separation problem associated with these inequalities can be solved efficiently. Though the mere size of the description thus does not harm its algorithmic  treatment, one may wonder whether an ILP-formulation of the traveling salesman problem in the above setup necessarily needs to be of exponential size, while for other combinatorial optimization problems (both NP-hard ones as  well as polynomial time solvable ones) like, e.g., the maximum clique, the maximum cut, or the maximum matching problem, polynomial size ILP-formulations are ready at hand. The work reported about in this paper had its origins in this question, which, to our initial slight  surprise, apparently has not been treated before.

Besides pure mathematical curiosity, the question for small rather than only efficiently separable ILP-formulations of combinatorial optimization problems also has some practical relevance. If simplicity of implementing a solution procedure  is a more important issue than efficiency of the solution process, a small ILP-formulation that can be fed immediately into a black box ILP-solver is likely to be preferred over one for which a separation procedure has to be implemented and linked to the solver. Therefore, also from a practical point of view there seems to be some interest in understanding better the general possibilities and limits of formulating combinatorial optimization problems via ILP's. 

Throughout this paper, given a set $ X \subseteq \Z^d $, let us call a polyhedron $ R \subseteq \R^d $ a
\emph{relaxation} for $ X $ if $ R \cap \Z^d = \conv(X) \cap \Z^d $ holds.
Furthermore, the smallest number $ \rc(X) $ of facets of any relaxation for $ X $ will be called the \emph{relaxation complexity} of $
X $.
With this notation, the initial question asks for the asymptotic behavior of $ \rc(\STSP_n) $.

Except for a paper by Jeroslow \cite{Jeroslow75}, the authors are not aware of any reference that deals with a similar
quantity.
In his paper, for a set $ X \subseteq \{0,1\}^d $ of binary vectors, Jeroslow introduces the term \emph{index} of $ X $
(short: $ \ind(X) $), which is defined as the smallest number of inequalities needed to separate $ X $ from the
remaining points in $ \{0,1\}^d $.
Thus, the notion of relaxation complexity can be seen as a natural extension of the index with respect to general
subsets of $ \Z^d $.
Clearly, we have that $ \ind(X) \leq \rc(X) $ holds for all sets $ X \in \{0,1\}^d $.
On the other hand, as we will briefly discuss in Section~\ref{sec:cube}, both quantities differ at most by an additive
term of $ d + 1 $.
As the main result in his paper, Jeroslow shows that $ 2^{d-1} $ is an upper bound on $ \ind(X) $, which is attained by the set of
binary vectors of length $ d $ that contain an even number of ones, see Section~\ref{sec:parity}.
We generalize his idea of bounding the index of a set $ X \subseteq \{0,1\}^d $ from below to provide lower bounds on
the relaxation complexity of general $ X $, see Section~\ref{sec:lower-bounds}.

This allows us to provide exponential lower bounds on the relaxation complexities of sets associated to problems that
are variants of the traveling salesman, the spanning tree, or the $T$-join problem.
In particular, we show that the asymptotic growth of $ \rc(\STSP_n) $ is $ 2^{\Theta(n)} $. In this sense,  the exponentially large subtour
elimination formulation thus is asymptotically smallest possible.

Of course, exponential lower bounds on the relaxation complexity only imply that it is impossible to come up with polynomial size ILP-formulations in the space of the variables~$x$ that are naturally associated with the respective problem in the way described above. They do not refer to ILP-formulations of the form
\begin{equation}
    \label{eq:intro-general-ips}
    \min \setdef{ \langle c,x \rangle }[Ax + By \leq b, \, x \in \Z^E, \, y \in \Z^m],
\end{equation}
where the vector $ y $ consists of additional variables and $ X = \{ x \in \Z^E : \exists y \in \Z^m: \ Ax + By \leq b
\} $.
Indeed, there exist classical formulations for $ X = \STSP_n $ for which the system $ Ax + By \leq b $ in~\eqref{eq:intro-general-ips} consists of
polynomially many linear inequalities, see, e.g. \cite{MillerTZ60} or \cite{GavishG78}.
As we briefly discuss in Section~\ref{sec:basics}, it turns out that in fact every in a certain sense reasonable  combinatorial
optimization problem  admits polynomial size descriptions of type~\eqref{eq:intro-general-ips}.

In contrast to this, recent results show that for many problems the associated polytope $\conv(X)$ has exponential extension complexity, i.e., every representation $ \conv(X) = \{ x \in \R^E : \exists y \in \R^m: \ Ax + By \leq b
\} $ has exponential size.
This refers to both NP-hard problems like the traveling salesman problem and others~\cite{FioriniMPTW12} (see also~\cite{AvisT13,PokuttaV13}) as well as even to the polynomial time solvable matching problem~\cite{Rothvoss14}. Thus, our exponential lower bounds on $ \rc(\STSP_n) $ in particular show that in polynomial size formulations of the traveling salesman problem of type~\eqref{eq:intro-general-ips} both the use of additional variables and imposing integrality constraints are necessary. 

Our paper is organized as follows:
In Section \ref{sec:basics}, we discuss basic facts about existence and properties of certain types of integer programs
describing sets that are mainly relevant in the field of combinatorial optimization.
This part gives further motivation on our notion of relaxations.
Section \ref{sec:cube} addresses to the relaxation complexity of the hypercube's vertices and serves as a simple
starting point to get familiar with questions asked (and basically being answered) in this paper.
Most of our main results are contained in Section~\ref{sec:lower-bounds}.
There, we introduce the concept of \emph{hiding sets}, which turns out to be a powerful technique to provide lower
bounds on $ \rc(X) $.
Finally, we use this technique to give exponential lower bounds on the sizes of relaxations for concrete structures that
occur in many practical IP-formulations.
In the last section of the paper, we briefly discuss some open questions regarding the rationality of minimum size
relaxations.
 
\makeatletter{}\section{Basic Observations}
\label{sec:basics}

General sets of integer points do not have to admit any relaxation.
Therefore, we will focus on \emph{polyhedral} sets $ X $, i.e., sets whose convex hull is a polyhedron.
By definition, we have that $ \rc(X) $ is finite for such sets.
Further, in this setting, it is easy to see that any relaxation corresponds to a valid IP-formulation and vice versa:
\begin{proposition}
    Let $ X \subseteq \Z^d $ be polyhedral and $ P \subseteq \R^d $ a polyhedron. Then, $ P $ is a relaxation for $ X $
    if and only if $ \sup \{ \langle c,x \rangle : x \in P \cap \Z^d \} = \sup \{ \langle c,x \rangle : x \in X \} $
    holds for all $ c \in \R^d $.
\end{proposition}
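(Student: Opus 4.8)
The plan is to translate both sides of the equivalence into statements about linear functionals and then invoke strict separation of a point from a disjoint closed convex set, using that $\conv(X)$ is a polyhedron — hence closed — because $X$ is polyhedral. Throughout I would rely on the elementary identity
\[
  \sup\{\langle c,x\rangle : x \in \conv(X)\} = \sup\{\langle c,x\rangle : x \in X\}
\]
valid for every $c \in \R^d$: the inequality ``$\geq$'' is trivial since $X \subseteq \conv(X)$, and ``$\leq$'' follows by writing any point of $\conv(X)$ as a finite convex combination $\sum_i \lambda_i x_i$ with $x_i \in X$ and bounding $\sum_i \lambda_i \langle c,x_i\rangle$ by $\max_i \langle c,x_i\rangle$.

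For the ``only if'' direction, assume $P \cap \Z^d = \conv(X) \cap \Z^d$. Since $X \subseteq \Z^d$ we have $X \subseteq \conv(X) \cap \Z^d \subseteq \conv(X)$, so for every $c$ the identity above forces
\[
  \sup_{x\in X}\langle c,x\rangle \;\le\; \sup_{x\in \conv(X)\cap\Z^d}\langle c,x\rangle \;\le\; \sup_{x\in\conv(X)}\langle c,x\rangle \;=\; \sup_{x\in X}\langle c,x\rangle,
\]
hence all three quantities agree; as $P\cap\Z^d$ and $\conv(X)\cap\Z^d$ are the same set, the desired equality of suprema follows.

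For the ``if'' direction, assume $\sup_{x\in P\cap\Z^d}\langle c,x\rangle = \sup_{x\in X}\langle c,x\rangle$ for all $c$, and prove the two inclusions making up $P\cap\Z^d = \conv(X)\cap\Z^d$. If some $z \in P\cap\Z^d$ did not lie in $\conv(X)$, then, $\conv(X)$ being a nonempty closed convex set, there would be $c$ and a scalar $\alpha$ with $\langle c,y\rangle \le \alpha < \langle c,z\rangle$ for all $y\in\conv(X)$; combined with the identity this gives $\sup_{x\in X}\langle c,x\rangle \le \alpha < \langle c,z\rangle \le \sup_{x\in P\cap\Z^d}\langle c,x\rangle$, contradicting the hypothesis. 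Conversely, if some $z\in\conv(X)\cap\Z^d$ did not lie in $P$, strict separation of $z$ from the (nonempty closed convex) polyhedron $P$ yields $c$ and $\alpha$ with $\langle c,y\rangle \le \alpha < \langle c,z\rangle$ for all $y\in P$; since $z\in\conv(X)$ the identity gives $\sup_{x\in X}\langle c,x\rangle \ge \langle c,z\rangle > \alpha \ge \sup_{x\in P\cap\Z^d}\langle c,x\rangle$, again contradicting the hypothesis. Hence $P$ is a relaxation for $X$.

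The only point that needs genuine care — and the step I expect to be the main obstacle, modest as it is — is the bookkeeping around degenerate cases: the suprema may equal $\pm\infty$, and $P\cap\Z^d$, $X$, or $P$ may be empty. The separation arguments always supply a genuine finite bound $\alpha$ on one side, so the displayed chains stay valid with the convention $\sup\emptyset=-\infty$; and the emptiness cases are cleanly dispatched by testing $c=0$, which forces $X=\emptyset$ to be equivalent to $P\cap\Z^d=\emptyset$ and lets the separation steps be applied only when the relevant convex sets are nonempty. Beyond correctly citing the strict separation theorem for a point and a disjoint closed convex set, nothing else is required.
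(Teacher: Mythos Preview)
Your proof is correct. The paper actually states this proposition without proof, treating it as an elementary observation (``it is easy to see that any relaxation corresponds to a valid IP-formulation and vice versa''), so there is no argument to compare against; your separation-based approach is exactly the natural way to fill in the details, and your handling of the degenerate cases via $c=0$ and the convention $\sup\emptyset=-\infty$ is sound.
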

\subsection{Projection is a Powerful Tool}
Following Schrijver's proof \cite[Thm. 18.1]{Schrijver86} of the fact that integer programming is $ \mathrm{NP} $-hard,
one finds that for any language $ \mathcal{L} \subseteq \{0,1\}^* $ that is in $ \mathrm{NP} $, there is a polynomial $
p $ such
that for any $ k > 0 $ there is a system $ Ax + By \leq b $ of at most $ p(k) $ linear inequalities and $ m \leq p(k) $
auxiliary variables with
\[
    \setdef{x \in \{0,1\}^k}[x \in \mathcal{L}] = \setdef{x \in \{0,1\}^k}[\exists \, y \in \{0,1\}^m \ Ax+By \leq b].
\]
Further, suppose we are given a \emph{boolean circuit} and let $ X \subseteq \{0,1\}^d $ be the set of inputs that
evaluate to true. It is straightforward to model the outputs of all intermediate gates in terms of additional variables
and linear inequalities: For inputs $ y_1, y_2 \in \{0,1\} $, the resulting output $ y_3 $ of a, say, OR-gate is the
unique solution $ y_3 \in [0,1] $ of the system $ y_1 \leq y_3, \, y_2 \leq y_3, \, y_3 \leq y_1+y_2 $. A crucial
property of these constraints is that if the inputs have $ 0/1 $-values, then $ y_3 $ is also implicitly forced to take
its value in $ \{0,1\} $. It is straightforward to make analogous observations for AND-gates and NOT-gates, see, e.g.,
\cite[p. 445]{Yannakakis91}. Since for every language $ \mathcal{L} \in \mathrm{P} $ there exists a polynomial time
algorithm to construct (polynomial size) boolean circuits that decide $ \mathcal{L} $, see, e.g.,
\cite[pp.~109--110]{AroraB09}, we conclude:
\begin{proposition}
    \label{prop:polynomial-size-integer-programs}
    Let $ X_d \subseteq \{0,1\}^d $ be a family of sets such that the membership problem ``Given $ x \in
    \{0,1\}^d $, is $ x $ in $ X_d $?'' is in $ \mathrm{NP} $. Then there exists a polynomial $ p $ such that for any $
    d $ there is a system $ Ax + By \leq b $ of at most $ p(d) $ linear inequalities and $ m \leq p(d) $ auxiliary
    variables with
    \[
        X_d = \setdef{x \in \{0,1\}^d}[Ax + By \leq b, \, y \in \Z^m].
    \]
    If the membership problem is even in $ \mathrm{P} $, then there exist such systems without integrality
    constraints on the auxiliary variables $ y $. \qed
\end{proposition}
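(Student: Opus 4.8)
The plan is to obtain the system directly from a Boolean circuit deciding membership in $X_d$, settling the $\mathrm{P}$-case first and then bootstrapping to the $\mathrm{NP}$-case; indeed the proposition is largely a corollary of the discussion preceding it, so the work is mostly bookkeeping.

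For the $\mathrm{P}$-case I would invoke the fact recalled above that there is a polynomial $p$ and a polynomial-time algorithm that, given $d$, outputs a Boolean circuit $C_d$ with at most $p(d)$ gates, each an AND-, OR-, or NOT-gate of fan-in at most two, computing the indicator function of $X_d$ on $\{0,1\}^d$. Introduce one auxiliary variable $y_g$ for each gate $g$, and for each gate write down its constantly many modeling inequalities together with $0 \le y_g \le 1$: for a NOT-gate with input $u$ the equation $y_g + u = 1$; for an OR-gate with inputs $u,v$ the inequalities $u \le y_g$, $v \le y_g$, $y_g \le u+v$; for an AND-gate the inequalities $y_g \le u$, $y_g \le v$, $u+v-1 \le y_g$. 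Finally add $0 \le x_i \le 1$ for all $i$ and the equation $y_{g^\ast} = 1$ for the output gate $g^\ast$. The number of inequalities and the number $m$ of auxiliary variables are bounded by a polynomial in $d$, so after enlarging $p$ both are at most $p(d)$.

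Correctness is then checked as follows, and only Boolean $x$ need be considered since the set in the statement is intersected with $\{0,1\}^d$ from the outside. Going through the gates in topological order and applying the crucial property recalled above, an easy induction shows that once all inputs of a gate are pinned to $0/1$-values, the gate's inequalities together with $0 \le y_g \le 1$ pin $y_g$ to the correct Boolean output of that gate; since the $x_i$ are $0/1$, this forces $y_{g^\ast} = C_d(x) \in \{0,1\}$. Hence the equation $y_{g^\ast} = 1$ is satisfiable exactly when $C_d(x) = 1$, i.e. when $x \in X_d$, and whenever it is satisfiable the unique feasible $y$ is the $0/1$-vector of gate values. Since no integrality on $y$ was imposed, this proves the second assertion.

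For the $\mathrm{NP}$-case I would reduce to the $\mathrm{P}$-case. By definition of $\mathrm{NP}$ there is a polynomial $q$ and a family $X'_e \subseteq \{0,1\}^e$ with $e = d + q(d)$, whose membership problem lies in $\mathrm{P}$, such that $x \in X_d$ iff $(x,z) \in X'_e$ for some $z \in \{0,1\}^{q(d)}$. Applying the $\mathrm{P}$-case to $X'$ gives a system $Ax + Cz + By \le b$ with no integrality on $y$, of size polynomial in $e$ and hence polynomial in $d$, describing $X'_e$ on Boolean $(x,z)$. Append the box constraints $0 \le z_j \le 1$ and regard $(z,y)$ as the vector of auxiliary variables, now with integrality imposed on all of it. For $x \in \{0,1\}^d$, integrality together with the box constraints forces $z \in \{0,1\}^{q(d)}$, and since the $y$ furnished by the circuit construction is already $0/1$, additionally requiring $y \in \Z$ costs nothing; thus the set of $x$ for which the enlarged system is feasible is exactly $X_d$. (Alternatively, the first assertion follows at once from the consequence of Schrijver's $\mathrm{NP}$-hardness proof recalled above, imposing $y \in \{0,1\}^m \subseteq \Z^m$ there.) I do not expect a real obstacle; the only points requiring a little attention are the inductive propagation in the correctness argument and, in the $\mathrm{NP}$-case, the bookkeeping that keeps all auxiliary variables integral and the total size polynomial, the latter being a routine matter of composing polynomials.
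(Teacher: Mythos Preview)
Your proposal is correct and follows essentially the same route as the paper: the $\mathrm{P}$-case is handled exactly as in the discussion preceding the proposition (model each gate by constantly many inequalities and use the propagation property to avoid integrality on $y$), and for the $\mathrm{NP}$-case the paper simply invokes the Schrijver argument you mention as your alternative, while your primary bootstrapping via a $\mathrm{P}$-time verifier is an equally valid reformulation of the same idea. There is nothing to add.
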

Coming back to the exponential size of the subtour elimination relaxation, it might be argued that the number of
inequalities is not the right measure of complexity since it is still possible to optimize linear functions over the
subtour elimination relaxation in polynomial time.
However, the mere existence of a relaxation over which optimization can be performed in polynomial time is nothing
special, as the following result shows.
\begin{proposition}
    Let $ X_d \subseteq \{0,1\}^d $ be a family of sets such that the membership problem ``Given $ x \in
    \{0,1\}^d $, is $ x $ in $ X_d $?'' is in $ \mathrm{P} $. Then there exists a family of relaxations $ R_d $ for $
    X_d $ such that linear programming over $ R_d $ can be done in polynomial time.
\end{proposition}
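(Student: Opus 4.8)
The plan is to build the required relaxation from the polynomial-size extended formulation supplied by Proposition~\ref{prop:polynomial-size-integer-programs}. Since the membership problem for $ X_d $ lies in $ \mathrm{P} $, that proposition gives a polynomial $ p $ and, for every $ d $, a system $ Ax + By \le b $ with at most $ p(d) $ inequalities in variables $ x \in \R^d $, $ y \in \R^m $, $ m \le p(d) $, such that $ X_d = \setdef{x \in \{0,1\}^d}[\exists \, y \in \R^m \ Ax + By \le b] $; moreover this system may be taken to be computable in time polynomial in $ d $, as the underlying boolean circuits are. Let $ Q_d := \setdef{x \in \R^d}[\exists \, y \in \R^m \ Ax + By \le b] $ denote the projection of the associated polyhedron onto the $ x $-space, which is again a polyhedron and satisfies $ Q_d \cap \{0,1\}^d = X_d $. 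The point that needs care is that $ Q_d $ itself is typically \emph{not} a relaxation for $ X_d $: the extended formulation only constrains the $ 0/1 $-points, so $ Q_d $ may well contain integer points outside $ [0,1]^d $ that do not lie in $ X_d $. I would therefore take as the relaxation the truncated polyhedron $ R_d := Q_d \cap [0,1]^d $.

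Next I would verify that $ R_d $ is a relaxation for $ X_d $. Because $ R_d \subseteq [0,1]^d $, we get $ R_d \cap \Z^d = R_d \cap \{0,1\}^d = Q_d \cap \{0,1\}^d = X_d $. On the other hand, since $ X_d \subseteq \{0,1\}^d $ and a $ 0/1 $-point that is a convex combination of $ 0/1 $-points must coincide with each of them, one has $ \conv(X_d) \cap \{0,1\}^d = X_d $, and as $ \conv(X_d) \subseteq [0,1]^d $ this gives $ \conv(X_d) \cap \Z^d = X_d $ as well. Combining the two identities yields $ R_d \cap \Z^d = \conv(X_d) \cap \Z^d $, which is exactly the defining property of a relaxation.

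It remains to show that linear programming over $ R_d $ can be done in polynomial time. For every objective $ c \in \R^d $ we have
\[
    \sup\setdef{\langle c,x\rangle}[x \in R_d] \;=\; \sup\setdef{\langle c,x\rangle}[Ax + By \le b,\ 0 \le x \le 1]\,,
\]
and the right-hand side is a linear program in the $ d + m $ variables $ (x,y) $ with at most $ p(d) + 2d $ constraints. Since $ R_d $ is bounded this program is never unbounded, so solving it (and reading off an optimal $ x $) can be carried out in time polynomial in $ d $ and the encoding length of $ c $ by any polynomial-time linear programming algorithm. I expect the only genuinely subtle step to be the observation that the bare projection $ Q_d $ need not be a relaxation and must be intersected with $ [0,1]^d $; the remaining verifications are routine. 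Note that this construction says nothing about the number of facets of $ R_d $, which is precisely why it does not render $ \rc(X_d) $ uninteresting.
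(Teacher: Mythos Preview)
Your argument is correct and follows essentially the same route as the paper: take the polynomial-size extended formulation from Proposition~\ref{prop:polynomial-size-integer-programs}, intersect with the box $[0,1]^d$, and project to the $x$-space to obtain $R_d$, then optimize via the lifted LP in $(x,y)$. You are in fact somewhat more explicit than the paper in verifying that $R_d$ satisfies $R_d\cap\Z^d=\conv(X_d)\cap\Z^d$ and in flagging that the system must be \emph{constructible} (not merely existent) in polynomial time.
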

\begin{proof}
    By Proposition \ref{prop:polynomial-size-integer-programs}, we know that for each $ d $ there exists a system $ Ax +
    By \leq b $ of polynomially many linear inequalities such that $ X_d = \{ x \in \{0,1\}^d : Ax + By \leq b, \, y \in
    \R^m \} $. As mentioned in the above argumentation, such systems even can be constructed by a
    polynomial time algorithm. Thus, setting
    \begin{align*}
        R'_d &:= \{ (x,y) : Ax + By \leq b, \, x \in [0,1]^d, \, y \in \R^m \} \\
        R_d & := \{ x \in \R^d : \exists y \in \R^m \, (x,y) \in R'_d \}
    \end{align*}
    gives us the desired relaxations $ R_d $. Indeed, given $ c \in \Q^d $ we have that
    \[
        \max \, \{ \langle c,x \rangle : x \in R_d \} = \max \, \{ \langle c,x \rangle : (x,y) \in R'_d \},
    \]
    where the latter problem can be solved in time polynomially bounded in $ d $ and the encoding length of $ c $. \qed
\end{proof}
\subsection{Encoding Lengths of Coefficients}
Our notion of relaxation complexity does not involve sizes of coefficients in (minimum size) relaxations.
In fact, we are not aware of any bounds on the coefficients' sizes of minimum size relaxations for general polyhedral
sets. We even do not know if they can be chosen to be rational in every case, see Section \ref{sec:rationality} for a
discussion. However, let us consider the following observation concerning binary vectors:
\begin{proposition}
    \label{prop:encoding-lengths-coefficients}
    There exists a polynomial $ p $ such that for any real vector $ a \in \R^d $ and any real number $ \gamma \in \R $
    there is a rational vector $ a' \in \Q^d $ and a rational number $ \gamma' \in \Q^d $ satisfying
    \begin{itemize}
        \item $ \{ x \in \{0,1\}^d : \langle a,x \rangle \leq \gamma \} = \{ x \in \{0,1\}^d : \langle a',x \rangle \leq
        \gamma' \} $ and
        \item the encoding lengths of $ a' $ and $ \gamma' $ are both bounded by $ p(d) $.
    \end{itemize}
\end{proposition}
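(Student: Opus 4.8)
The plan is to reduce the claim to the standard fact that a nonempty rational polyhedron all of whose defining inequalities have small encoding length must contain a rational point of small encoding length. First I would observe that the set $\{x\in\{0,1\}^d : \langle a,x\rangle \le \gamma\}$ depends on the pair $(a,\gamma)$ only through the sign pattern of the finitely many numbers $\langle a,x\rangle - \gamma$ with $x\in\{0,1\}^d$, and that this sign pattern is invariant under multiplying $(a,\gamma)$ by a positive scalar. Hence it suffices to produce a rational pair $(a',\gamma')$ inducing the same sign pattern on $\{0,1\}^d$.

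Second, I would set up a finite linear system in the unknowns $(a',\gamma')\in\R^{d+1}$. Write $T := \{x\in\{0,1\}^d : \langle a,x\rangle \le \gamma\}$ and $F := \{0,1\}^d\setminus T$, and consider the system consisting of the inequalities $\langle a',x\rangle - \gamma' \le 0$ for $x\in T$ together with $\langle a',x\rangle - \gamma' \ge 1$ for $x\in F$. Each such inequality is a linear inequality in the $d+1$ variables $(a'_1,\dots,a'_d,\gamma')$ with coefficients in $\{-1,0,1\}$ and right-hand side in $\{0,1\}$, so its encoding length is $O(d)$, \emph{independently of the magnitudes of $a$ and $\gamma$} (and, crucially, the number of such inequalities will be irrelevant in what follows). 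Since $F$ is finite, there is some $\delta > 0$ with $\langle a,x\rangle - \gamma \ge \delta$ for all $x\in F$; therefore $(a/\delta,\gamma/\delta)$ satisfies the whole system, so the polyhedron $P\subseteq\R^{d+1}$ it defines is nonempty. Conversely, every $(a',\gamma')\in P$ satisfies $\langle a',x\rangle \le \gamma'$ precisely for $x\in T$ (the ``$\ge 1$'' rows force $\langle a',x\rangle > \gamma'$ for $x\in F$), so it reproduces the desired subset of $\{0,1\}^d$.

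Third, I would invoke the standard estimate: a nonempty polyhedron in $\R^{d+1}$ whose defining inequalities all have encoding length at most $\phi$ contains a rational point of encoding length bounded by a fixed polynomial in $d+1$ and $\phi$ (one passes to a minimal nonempty face, solves a square subsystem of the constraints tight on it via Cramer's rule, and bounds the determinants by Hadamard's inequality; see, e.g., \cite{Schrijver86}). Applying this to $P$, whose defining inequalities have encoding length $O(d)$, yields a rational $(a',\gamma')\in P$ of encoding length bounded by $p(d)$ for a polynomial $p$ that does not depend on $a$ or $\gamma$. By the second step this $(a',\gamma')$ defines exactly the set in the statement, which completes the argument.

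I expect the only genuine subtlety to be the replacement of the strict inequalities ``$\langle a,x\rangle > \gamma$'' describing $F$ by the non-strict inequalities ``$\langle a',x\rangle - \gamma' \ge 1$'': here one must combine finiteness of $F$ (which supplies a uniform positive gap $\delta$) with scale invariance of the target set to guarantee simultaneously that $P$ is nonempty and that every point of $P$ induces precisely the required set. The remaining steps are routine applications of the standard encoding-length bounds for rational polyhedra.
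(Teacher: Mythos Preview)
Your proof is correct and follows essentially the same approach as the paper: both construct a polyhedron with $\{-1,0,1\}$ coefficients whose points are precisely the (scaled) linear forms inducing the given subset $T\subseteq\{0,1\}^d$, verify nonemptiness via a scaling argument exploiting the finite positive gap on $F$, and then invoke the standard encoding-length bound for rational points in such polyhedra. The only cosmetic difference is that the paper first translates by the maximizer $\bar x\in T$ so as to work with the normal vector alone in $\R^d$ (recovering $\gamma'=\langle a',\bar x\rangle$ afterwards), whereas you keep $\gamma'$ as an additional unknown and work directly in $\R^{d+1}$.
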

\begin{proof}
    We may assume that $ X := \{ x \in \{0,1\}^d : \langle a,x \rangle \leq \gamma \} $ is not empty. By setting
    $ \bar{x} := \arg \max \{ \langle a,x \rangle : x \in X \} $, let us define the affine isomorphism $ \varphi \colon
    \R^d \to \R^d $ via $ \varphi(x) := x - \bar{x} $. Clearly, we now have that
    \[
        \langle a,x \rangle \leq \gamma \iff \langle a,\varphi(x) \rangle \leq 0
    \]
    holds for all $ x \in \{0,1\}^d $. Thus, the polyhedron
    \[
        P := \Big\{ \tilde{a} \in \R^d : \langle y,\tilde{a} \rangle \leq 0 \ \forall \, y \in \varphi(X), \,
        \langle y,\tilde{a} \rangle \geq 1 \ \forall \, y \in \varphi(\{0,1\}^d \setminus X) \Big\}
    \]
    is not empty. Since $ P $ can be described by a system of linear inequalities whose coefficients are in $ \{-1,0,1\}
    $, we know that there exists a rational point $ a' \in P $ whose encoding length is polynomially bounded in $ d
    $, see \cite{Schrijver86}.

    For any point $ x \in \{0,1\}^d $, by the definition of $ a' $, we now have that
    \begin{align*}
        \langle a,x \rangle \leq \gamma
        \iff \langle a,\varphi(x) \rangle \leq 0
        & \iff \langle a',\varphi(x) \rangle \leq 0 \\
        & \iff \langle a',x-\bar{x} \rangle \leq 0
        \iff \langle a',x \rangle \leq \langle a',\bar{x} \rangle.
    \end{align*}
    and hence setting $ \gamma' := \langle a',\bar{x} \rangle $ completes the proof. \qed
\end{proof}
Proposition \ref{prop:encoding-lengths-coefficients} tells us that if we are given a set $ X \subseteq \{0,1\}^d $, then we
can always find a rational relaxation $ \tilde{R} $ for $ X $ whose number of facets is close to $ \rc(X) $ and the
encoding lengths of coefficients in a suitable outer description of $ \tilde{R} $ can be polynomially bounded in $ d $.
Indeed, let $ R $ be a minimal relaxation for $ X $, perturb its facet-defining inequalities according to Proposition
\ref{prop:encoding-lengths-coefficients} and call the obtained polyhedron $ R' $. If we now choose $ C \subseteq \R^d $
to be any relaxation for $ \{0,1\}^d $, we obtain that $ \tilde{R} := R' \cap C $ is a relaxation for $ X $. In Section
\ref{sec:cube}, we will see that the number of inequalities we thus have to add, i.e., the number of facets of $ C
$, can be assumed to be at most $ d+1 $.
\subsection{Restricting to Facet-Defining Inequalities}
At the end of this section, let us briefly discuss a further requirement on relaxations:
Many known relaxations for sets $ X \subseteq \Z^d $ that are identified with feasible points in combinatorial problems
are defined by linear inequalities of which, preferably, most of them are facet-defining for $ \conv(X) $. Clearly, this
has important practical reasons since such formulations are tightest possible in some sense. However, if one is
interested in a relaxation that has as few number of facets as possible, one cannot only use
facet-defining inequalities of $ \conv(X) $: In fact, in the next section we will see that $ \rc(\{0,1\}^d) = d + 1 $
whereas by
removing any of the cube's inequalities the remaining polyhedron gets unbounded. Nevertheless, the restriction turns out
to be not too hard:
\begin{proposition}
    Let $ X \subseteq \Z^d $ be polyhedral and $ \rc_F(X) $ the smallest number of facets of any relaxation for $ X $
    whose facet-defining inequalities are also facet-defining for $ \conv(X) $. Then, $ \rc_F(X) \leq \dim(X) \cdot
    \rc(X) $.
\end{proposition}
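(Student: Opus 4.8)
The plan is to start from a relaxation $R$ for $X$ with exactly $\rc(X)$ facets and to replace each of its facets by at most $\dim(X)$ facet-defining inequalities of $\conv(X)$. Write $R$ by its facet-defining inequalities $\langle a_i,x\rangle\le\beta_i$, $i=1,\dots,\rc(X)$, together with equations describing $\aff(R)$. Since $X\subseteq\conv(X)\cap\Z^d=R\cap\Z^d$ and $R$ is convex, we get $\conv(X)\subseteq R$ and hence $\aff(\conv(X))\subseteq\aff(R)$; in particular each functional $\langle a_i,\cdot\rangle$ is bounded above by $\beta_i$ on $\conv(X)$, so $\beta_i^{\ast}:=\max\setdef{\langle a_i,x\rangle}[x\in\conv(X)]$ is well defined with $\beta_i^{\ast}\le\beta_i$, and $\langle a_i,x\rangle\le\beta_i^{\ast}$ is a valid inequality for $\conv(X)$ that is tight on a nonempty face $F_i$.

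The core step is to show that for each $i$ there is a set $S_i$ of at most $\dim(X)$ facet-defining inequalities of $\conv(X)$ which, together with the equations cutting out $\aff(\conv(X))$, imply $\langle a_i,x\rangle\le\beta_i^{\ast}$. I would fix a description $\conv(X)=\setdef{x}[\langle c_j,x\rangle\le d_j\ (j\in J),\ \langle e_l,x\rangle=f_l\ (l\in L)]$ in which the $c_j$ define the facets and, after projecting each $c_j$ onto the linear space parallel to $\aff(\conv(X))$ and adjusting $d_j$ accordingly (which changes neither the inequality on $\aff(\conv(X))$ nor the induced facet), assume the $c_j$ span a space of dimension $\dim(X)$. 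Linear programming duality applied to $\max\setdef{\langle a_i,x\rangle}[x\in\conv(X)]$ yields multipliers $\lambda_j\ge0$, $\mu_l\in\R$ with $a_i=\sum_j\lambda_jc_j+\sum_l\mu_le_l$ and $\beta_i^{\ast}=\sum_j\lambda_jd_j+\sum_l\mu_lf_l$, and complementary slackness guarantees $\lambda_j>0$ only when $F_i$ is contained in the facet $\setdef{x\in\conv(X)}[\langle c_j,x\rangle=d_j]$. Carathéodory's theorem applied to the cone generated by $\setdef{c_j}[\lambda_j>0]$ then gives such an $S_i$ with $|S_i|\le\dim(X)$ and nonnegative multipliers $\lambda_j'$ ($j\in S_i$) with $\sum_{j\in S_i}\lambda_j'c_j=a_i-\sum_l\mu_le_l$; evaluating this identity at a point in the relative interior of $F_i$ (where every $c_j$, $j\in S_i$, and every $e_l$ is tight) shows $\sum_{j\in S_i}\lambda_j'd_j+\sum_l\mu_lf_l=\beta_i^{\ast}$, so the inequalities $\setdef{\langle c_j,x\rangle\le d_j}[j\in S_i]$ with the affine-hull equations indeed force $\langle a_i,x\rangle\le\beta_i^{\ast}$.

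With the $S_i$ in hand I would set $R':=\aff(\conv(X))\cap\bigcap_{i}\bigcap_{j\in S_i}\setdef{x}[\langle c_j,x\rangle\le d_j]$. Every one of these constraints is valid for $\conv(X)$, so $R'\supseteq\conv(X)$ and hence $R'\cap\Z^d\supseteq\conv(X)\cap\Z^d$. Conversely, if $z\in R'\cap\Z^d$, then $z\in\aff(\conv(X))\subseteq\aff(R)$, so $z\notin R$ would force $\langle a_i,z\rangle>\beta_i\ge\beta_i^{\ast}$ for some $i$, contradicting the previous paragraph since $z$ satisfies all inequalities in $S_i$ and the affine-hull equations. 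Thus $z\in R\cap\Z^d=\conv(X)\cap\Z^d$, so $R'$ is a relaxation for $X$; its facet-defining inequalities occur among the $\langle c_j,x\rangle\le d_j$ and are therefore facet-defining for $\conv(X)$, and their number is at most $\sum_i|S_i|\le\dim(X)\cdot\rc(X)$, which yields the claimed bound.

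I expect the main obstacle to be the bookkeeping caused by $\conv(X)$ not being full-dimensional: one has to check that projecting facet normals onto the linear space parallel to $\aff(\conv(X))$ preserves the property of being facet-defining for $\conv(X)$, and one has to observe that the equations describing $\aff(\conv(X))$—being valid for all of $\conv(X)$ rather than facet-defining—legitimately appear in $R'$ without being counted toward $\rc_F(X)$. The degenerate cases are harmless: one may assume $X\neq\emptyset$, and if some $\langle a_i,\cdot\rangle$ is constant on $\conv(X)$ then complementary slackness forces all corresponding $\lambda_j$ to vanish, so $S_i=\emptyset$.
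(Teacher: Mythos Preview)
Your approach is correct and is precisely the argument the paper has in mind: the paper's proof is a single sentence invoking Carath\'eodory's theorem to replace each facet-defining inequality of a minimal relaxation by at most $\dim(X)$ facet-defining inequalities of $\conv(X)$, and your write-up spells out the LP-duality and cone-Carath\'eodory details that make this replacement work, including the non-full-dimensional bookkeeping. There is no substantive difference in strategy.
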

\begin{proof}
    By Carath\'eodory's Theorem, any facet-defining inequality of a relaxation $ R $ for $
    X $ can be replaced by $ \dim(X) $ many facet-defining inequalities of $ \conv(X) $. The resulting polyhedron is
    still a relaxation for $ X $. \qed
\end{proof}
 
\makeatletter{}\section{Warm-Up: The Cube}
\label{sec:cube}

\noindent
As mentioned in the introduction, Jeroslow \cite{Jeroslow75} showed that for any set $ X \subseteq \{0,1\}^d $, one
needs at most $ 2^{d-1} $ many linear inequalities in order to separate $ X $ from $ \{0,1\}^d \setminus X $. If $ P
\subseteq \R^d $ is a polyhedron such that $ P \cap \{0,1\}^d = X $, then, in order to construct a relaxation for $ X $,
we need to additionally separate all points $ \Z^d \setminus \{0,1\}^d $ from $ X $.  This can be done by intersecting $
P $ with a relaxation for $ \{0,1\}^d $. We conclude:

\begin{proposition}
    \label{prop:binary}
    Let $ X \subseteq \{0,1\}^d $. Then $ \rc(X) \leq 2^{d-1} + \rc(\{0,1\}^d) $. \qed
\end{proposition}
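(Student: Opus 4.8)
The plan is to combine Jeroslow's separation bound with a relaxation for the cube and then argue that intersecting two polyhedra does not create facets beyond the union of their defining inequalities. First I would invoke Jeroslow's theorem: there is a system $ Ax \leq b $ consisting of at most $ 2^{d-1} $ inequalities with $ \setdef{x \in \{0,1\}^d}[Ax \leq b] = X $. Setting $ P := \setdef{x \in \R^d}[Ax \leq b] $, we thus have $ P \cap \{0,1\}^d = X $. Next, let $ C \subseteq \R^d $ be a relaxation for $ \{0,1\}^d $ attaining the minimum, so $ C $ has exactly $ \rc(\{0,1\}^d) $ facets; by the definition of a relaxation, $ C \cap \Z^d = \conv(\{0,1\}^d) \cap \Z^d = [0,1]^d \cap \Z^d = \{0,1\}^d $.

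Then I would put $ R := P \cap C $ and verify that $ R $ is a relaxation for $ X $. On the integer-point side, $ R \cap \Z^d = P \cap (C \cap \Z^d) = P \cap \{0,1\}^d = X $. On the convex-hull side, since $ X \subseteq \{0,1\}^d $ we have $ \conv(X) \subseteq [0,1]^d $, so $ \conv(X) $ contains no integer points outside $ \{0,1\}^d $, and every point of $ X $ is a vertex of the $ 0/1 $-polytope $ \conv(X) $; hence $ \conv(X) \cap \Z^d = X = R \cap \Z^d $, which is exactly the relaxation condition. So $ R $ is a valid relaxation for $ X $.

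Finally I would count the facets of $ R $. The finite inequality system obtained by concatenating the at most $ 2^{d-1} $ inequalities describing $ P $ with the $ \rc(\{0,1\}^d) $ facet-defining inequalities of $ C $ already describes $ R = P \cap C $; since every facet of a polyhedron given by a finite system is induced by one of the system's rows, $ R $ has at most $ 2^{d-1} + \rc(\{0,1\}^d) $ facets, giving $ \rc(X) \leq 2^{d-1} + \rc(\{0,1\}^d) $. The proof is essentially routine; the only point deserving a moment's care is this last one, namely that forming the intersection of two polyhedra never produces more facets than the union of their inequality descriptions — but that follows immediately from the standard fact that after discarding redundant inequalities each remaining one is facet-defining, so the facet count can only drop, never rise.
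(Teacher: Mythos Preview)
Your proof is correct and follows essentially the same approach as the paper: the paper's argument is given in the paragraph immediately preceding the proposition (Jeroslow's $2^{d-1}$ inequalities to separate $X$ from $\{0,1\}^d \setminus X$, then intersect with a relaxation of the cube), and the proposition itself carries only a \qed. Your write-up simply fills in the routine verifications that the paper leaves implicit.
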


\noindent
Motivated by Proposition \ref{prop:binary}, we are interested in the relaxation complexity of $ \{0,1\}^d $. Since $
[0,1]^d = \conv(\{0,1\}^d) $, we obviously have that $ \rc(\{0,1\}^d) \leq 2d $. However, it turns out that one can
construct a relaxation of only $ d+1 $ facets:

\begin{lemma}
    \label{lem:rc-cube}
    For $ d \geq 1 $, we have
    \[
        \{0,1\}^d = \setdef{x \in \Z^d}[x_k \leq 1 + \sum_{i=k+1}^d 2^{-i} x_i \ \forall \, k \in [d], \,
        x_1 + \sum_{i=2}^d 2^{-i} x_i \geq 0].
    \]
\end{lemma}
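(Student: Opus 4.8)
The plan is to prove the two inclusions separately; the inclusion ``$\subseteq$'' is immediate and essentially all the work is in ``$\supseteq$''. For ``$\subseteq$'', I would take $x \in \{0,1\}^d$ and observe that each $x_i \ge 0$, so the right-hand side $1 + \sum_{i=k+1}^d 2^{-i} x_i$ of the $k$-th inequality is at least $1 \ge x_k$, while $x_1 + \sum_{i=2}^d 2^{-i} x_i$ is a nonnegative combination of the $x_i$ and hence $\ge 0$; thus $x$ satisfies all $d+1$ inequalities. For ``$\supseteq$'' I would fix $x \in \Z^d$ satisfying all the inequalities and aim to show $0 \le x_i \le 1$ for every $i$.

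For the reverse inclusion I would set up notation $f_k := \sum_{i=k}^d 2^{-i} x_i$ for $k \in \{1,\dots,d+1\}$, so that $f_{d+1}=0$, $f_k = 2^{-k}x_k + f_{k+1}$, the $k$-th inequality reads $x_k \le 1 + f_{k+1}$, and the last inequality reads $x_1 + f_2 \ge 0$; I would also record the elementary estimate $\sum_{i=k+1}^d 2^{-i} = 2^{-k} - 2^{-d}$. The first step is to show $x_i \le 1$ for all $i$: letting $m := \max_i x_i$ and assuming $m \ge 2$, I pick $j$ with $x_j = m$, plug $x_i \le m$ into the $j$-th inequality to get $m \le 1 + m(2^{-j}-2^{-d})$, i.e.\ $m\,(1 - 2^{-j} + 2^{-d}) \le 1$, and note that since $j \ge 1$ the coefficient of $m$ exceeds $\tfrac12$, forcing $m < 2$, a contradiction. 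Hence all $x_i \le 1$.

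The second step recovers the lower bounds by propagating nonnegativity of the $f_k$ upward. Using $x_i \le 1$ one gets $f_2 \le \sum_{i=2}^d 2^{-i} < \tfrac12$, so the last inequality gives $x_1 \ge -f_2 > -\tfrac12$, i.e.\ $x_1 \in \{0,1\}$. I would then prove by induction on $k = 2,\dots,d$ that $f_k \ge 0$ and $x_k \in \{0,1\}$: assuming $x_{k-1}\in\{0,1\}$ (and, for $k\ge 3$, also $f_{k-1}\ge 0$), if $x_{k-1}=1$ the $(k-1)$-th inequality yields $1 \le 1 + f_k$ so $f_k \ge 0$, whereas if $x_{k-1}=0$ then $f_k = f_{k-1} \ge 0$ for $k \ge 3$, and for $k = 2$ the last inequality $x_1 + f_2 \ge 0$ becomes $f_2 \ge 0$. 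In either case $f_k \ge 0$, and since $f_{k+1} \le \sum_{i=k+1}^d 2^{-i} < 2^{-k}$, we obtain $2^{-k}x_k = f_k - f_{k+1} > -2^{-k}$, so $x_k \ge 0$ and thus $x_k \in \{0,1\}$, closing the induction.

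The step I expect to be the real obstacle is the second one. The $d$ inequalities of the form $x_k \le 1 + \sum_{i>k}2^{-i}x_i$ impose no lower bound on the variables at all, and a naive attempt to bound $\min_i x_i$ by mimicking the argument for the upper bounds fails because the single lower-bound inequality couples every coordinate. The right idea is the invariant ``$f_k \ge 0$'': once all coordinates are known to be $\le 1$, knowing $x_{k-1}\in\{0,1\}$ lets one deduce $f_k \ge 0$ (via the $(k-1)$-th inequality when $x_{k-1}=1$ and via the identity $f_k = f_{k-1}$ when $x_{k-1}=0$, with the last inequality serving as the base case), and $f_k \ge 0$ together with the cheap bound $f_{k+1} < 2^{-k}$ immediately pins $x_k$ to $\{0,1\}$. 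Identifying this telescoping structure is the only non-routine point; everything else is geometric-series bookkeeping.
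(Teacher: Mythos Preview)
Your proof is correct and follows essentially the same approach as the paper: both first establish $x_i \le 1$ for all $i$ via the $k$-th inequality, and then obtain $x_i \ge 0$ by exploiting that the tail sums $\sum_{i>k}2^{-i}x_i$ are nonnegative once the earlier coordinates are known to lie in $\{0,1\}$. Your packaging via the explicit invariant $f_k \ge 0$ and a forward induction is a slightly cleaner reformulation of the paper's argument by contradiction (which takes the smallest $j$ with $x_j<0$, shows all $x_i$ with $i<j$ must vanish, and then contradicts the last inequality), but the underlying observations are identical.
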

\begin{proof}
    Obviously, any point $ x \in \{0,1\}^d $ satisfies
    \begin{equation}
        \label{eq:rc-cube-1}
        x_k \leq 1 + \sum_{i=k+1}^d 2^{-i} x_i
    \end{equation}
    for all $ k \in [d] $ and
    \begin{equation}
        \label{eq:rc-cube-2}
        x_1 + \sum_{i=2}^d 2^{-i} x_i \geq 0.
    \end{equation}
    Let $ x \in \Z^d $ be any integer point that satisfies \eqref{eq:rc-cube-1} for all $ k \in [d] $ as well as
    \eqref{eq:rc-cube-2}. First, we claim that $ x_i \leq 1 $ for all $ i \in [d] $: Suppose that $ x_k > 1 $ for some $
    k \in [d] $. W.l.o.g. we may assume that $ x_i \leq 1 $ for all $ i > k $ and obtain
    \[
        x_k \stackrel{\text{\eqref{eq:rc-cube-1}}}{\leq}
        1 + \sum_{i=k+1}^d 2^{-i} x_i \leq 1 + \sum_{i=k+1}^d 2^{-i} < 2,
    \]
    a contradiction. Further, we see that $ x_1 \geq 0 $ since (due to $ x_i \leq 1 $ for all $ i $)
    \[
        x_1 \stackrel{\text{\eqref{eq:rc-cube-2}}}{\geq}
        - \sum_{i=2}^d 2^{-i} x_i \geq - \sum_{i=2}^d 2^{-i} > -1.
    \]
    It remains to show that $ x_i \geq 0 $ for all $ i \in [d] \setminus \{1\} $. Suppose that $ x_j \leq -1 $ for some
    $ j \in [d] \setminus \{1\} $ and $ x_i \geq 0 $ for all $ i < j $. In this case, we claim that $ x_i = 0 $ for all
    $ i < j $: Otherwise, let $ k $ be the largest $ k<j $ such that $ x_k > 0 $ (and hence $ x_k = 1 $). By inequality
    \eqref{eq:rc-cube-1}, we would obtain
    \begin{align*}
        1 = x_k \leq 1 + \sum_{i=k+1}^d 2^{-i} x_i & = 1 + 2^{-j} x_j + \sum_{i=j+1}^d 2^{-i} x_i \\
        & \leq 1 + 2^{-j} \cdot (-1) + \sum_{i=j+1}^d 2^{-i} < 1.
    \end{align*}
    Thus, we have $ x_i \geq 0 $ for all $ i < j $, and hence, by inequality \eqref{eq:rc-cube-2}, we deduce
    \[
        0 \leq x_1 + \sum_{i=2}^d 2^{-i} x_i = 2^{-j} x_j + \sum_{i=j+1}^d 2^{-i} x_i \leq
        2^{-j} \cdot (-1) + \sum_{i=j+1}^d 2^{-i} < 0,
    \]
    a contradiction. \qed
\end{proof}

\noindent
To show that this construction is best possible, note that if a polyhedron that contains $ \{0,1\}^d $ (and hence is $ d
$-dimensional) has less than $ d+1 $ facets, it must be unbounded. In order to show that such a (possibly irrational)
polyhedron must contain infinitely many integer points (and hence cannot be a relaxation of $ \{0,1\}^d $), we make use
of Minkowski's theorem:

\begin{theorem}[\textsc{Minkowski} \cite{Minkowski96}]
    \label{thm:minkowski}
    Any convex set which is symmetric with respect to the origin and with volume greater than $ 2^d $ contains a
    non-zero integer point.
\end{theorem}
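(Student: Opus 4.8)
\noindent
The plan is to run the classical Blichfeldt--Minkowski pigeonhole argument. Let $ K \subseteq \R^d $ be convex, symmetric about the origin, with $ \operatorname{vol}(K) > 2^d $; since convex sets are Lebesgue measurable, this volume is well defined. The first reduction is to pass to the scaled body $ \tfrac12 K := \setdef{\tfrac12 x}[x \in K] $, which is again measurable and satisfies $ \operatorname{vol}(\tfrac12 K) = 2^{-d}\operatorname{vol}(K) > 1 $.

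The heart of the argument is to show that $ \tfrac12 K $ contains two distinct points whose difference lies in $ \Z^d $. To this end I would partition $ \R^d $ into the half-open unit cubes $ C_z := z + [0,1)^d $, $ z \in \Z^d $, so that $ \tfrac12 K $ is the disjoint union of the pieces $ \tfrac12 K \cap C_z $. Translating the $ z $-th piece by $ -z $ places it inside $ [0,1)^d $, and these translates have total volume $ \sum_{z \in \Z^d} \operatorname{vol}(\tfrac12 K \cap C_z) = \operatorname{vol}(\tfrac12 K) > 1 = \operatorname{vol}([0,1)^d) $. Were the translates pairwise disjoint, their union would be a subset of $ [0,1)^d $ of volume exceeding $ 1 $, which is impossible; hence there are $ z \neq z' $ in $ \Z^d $ and a point $ p $ with $ p + z \in \tfrac12 K $ and $ p + z' \in \tfrac12 K $. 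Put $ x := p + z $ and $ y := p + z' $; then $ x, y \in \tfrac12 K $ and $ x - y = z - z' \in \Z^d \setminus \{0\} $.

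It remains to turn this lattice vector of $ \tfrac12 K $ into one of $ K $, and here symmetry and convexity enter. From $ x \in \tfrac12 K $ we get $ 2x \in K $, and likewise $ 2y \in K $, so $ -2y \in K $ by central symmetry; convexity of $ K $ then gives $ x - y = \tfrac12(2x) + \tfrac12(-2y) \in K $. Hence $ x - y $ is a non-zero integer point of $ K $, which is what we want.

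I do not anticipate a real obstacle. The argument uses nothing about $ K $ beyond measurability of convex sets, and the strict inequality $ \operatorname{vol}(K) > 2^d $ is precisely what makes the pigeonhole step go through without any boundedness or closedness hypothesis on $ K $. The only thing to handle with a little care is the measure-theoretic bookkeeping in the pigeonhole step --- that countably many pairwise disjoint pieces of the unit cube have total volume at most $ 1 $, and that a non-empty overlap of two of the translates furnishes the common point $ p $ --- but this is entirely routine.
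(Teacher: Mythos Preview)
Your argument is the standard Blichfeldt--Minkowski pigeonhole proof and it is correct as written; the measure-theoretic bookkeeping you flag is indeed routine, and the use of convexity and central symmetry in the final step is exactly right.

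There is nothing to compare against, however: the paper does not prove Theorem~\ref{thm:minkowski}. It is quoted from the literature (Minkowski~\cite{Minkowski96}) as a black box and then applied to derive Corollary~\ref{cor:minkowski}. So your write-up supplies a proof where the paper deliberately omits one, which is fine but goes beyond what the paper itself does.
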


\noindent
For $ \varepsilon > 0 $ let $ B_{\varepsilon} := \setdef{x \in \R^d}[\|x\|_2 < \varepsilon] $ be the open ball with radius
$ \varepsilon $. As a direct consequence of Minkowski's theorem, the following corollary is useful for our
argumentation.

\begin{corollary}
    \label{cor:minkowski}
    Let $ c \in \R^d \setminus \{ \zerovec \} $, $ \lambda_0 \in \R $ and $ \varepsilon > 0 $. Then
    \[
        L(c,\lambda_0,\varepsilon) := \setdef{\lambda c \in \R^d}[\lambda \geq \lambda_0] + B_{\varepsilon}
    \]
    contains infinitely many integer points.
\end{corollary}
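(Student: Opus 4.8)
The plan is to deduce the statement from a single assertion: the full cylinder $\mathrm{Cyl}:=\R c+B_\varepsilon=\{x\in\R^d:\operatorname{dist}(x,\R c)<\varepsilon\}$ contains infinitely many integer points. Granting this, one writes each $z\in\mathrm{Cyl}$ uniquely as $z=\mu(z)\,c+w(z)$ with $w(z)$ orthogonal to $c$, so that $\|w(z)\|_2=\operatorname{dist}(z,\R c)<\varepsilon$ and hence $z\in L(c,\lambda_0,\varepsilon)$ whenever $\mu(z)\ge\lambda_0$. Since $\mathrm{Cyl}$ and $\Z^d$ are both symmetric about the origin and $\mu(-z)=-\mu(z)$, the set $\{\mu(z):z\in\mathrm{Cyl}\cap\Z^d\}\subseteq\R$ is symmetric about $0$; it is also unbounded, for otherwise $\mathrm{Cyl}\cap\Z^d$ would lie in a bounded region and thus be finite. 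A subset of $\R$ that is symmetric about $0$ and unbounded is unbounded from above, and a short further argument then gives that $\{z\in\mathrm{Cyl}\cap\Z^d:\mu(z)\ge\lambda_0\}$ is infinite; every one of its elements lies in $L(c,\lambda_0,\varepsilon)$, which is what we want.

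It thus remains to show that $\mathrm{Cyl}$ contains infinitely many integer points, and here Minkowski's theorem enters. Fix an arbitrary integer $n\ge1$; I will exhibit $n$ pairwise distinct integer points in $\mathrm{Cyl}$. Put $\delta:=\varepsilon/(n+1)$ and consider $C:=\{\lambda c:-T\le\lambda\le T\}+B_\delta$, which is convex and symmetric about the origin, being a Minkowski sum of two such sets. Because $c\neq\zerovec$, $C$ contains a straight cylinder of length $2T\|c\|_2$ with $(d-1)$-dimensional ball cross-sections of radius $\delta$, so $\operatorname{vol}(C)$ grows linearly in $T$; choosing $T$ large enough that $\operatorname{vol}(C)>2^d$, Theorem~\ref{thm:minkowski} provides a non-zero integer point $z\in C$, for which $\operatorname{dist}(z,\R c)<\delta$. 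Since orthogonal projection onto the line $\R c$ is linear, $\operatorname{dist}(kz,\R c)=k\cdot\operatorname{dist}(z,\R c)<k\delta\le n\delta<\varepsilon$ for every $k\in\{1,\dots,n\}$, so $z,2z,\dots,nz$ are $n$ integer points of $\mathrm{Cyl}$, pairwise distinct since $z\neq\zerovec$. As $n$ was arbitrary, $\mathrm{Cyl}\cap\Z^d$ is infinite.

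The one genuinely clever ingredient is the use of multiples in the second paragraph: a single non-zero integer point at distance less than $\delta$ from the line $\R c$ automatically yields, via $z,2z,\dots$, roughly $\varepsilon/\delta$ integer points of $\mathrm{Cyl}$, so letting $\delta\to0$ forces $\mathrm{Cyl}\cap\Z^d$ to be infinite, whereas one application of Minkowski's theorem by itself produces only one point. The remainder is bookkeeping --- in particular, it is the unboundedness of $\{\mu(z):z\in\mathrm{Cyl}\cap\Z^d\}$, not merely the existence of a single $z$ with $\mu(z)\ge\lambda_0$, that yields \emph{infinitely many} integer points in $L(c,\lambda_0,\varepsilon)$. (As an aside, the final step can also be done without Minkowski: when $\R c\cap\Z^d=\{\zerovec\}$, the orthogonal projection of $\Z^d$ onto $c^\perp$ is a rank-$d$ subgroup of the $(d-1)$-dimensional space $c^\perp$, hence not discrete, hence accumulating at the origin.)
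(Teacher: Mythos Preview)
Your proof is correct and shares the paper's overall architecture: reduce to showing that the full two-sided cylinder $\R c+B_\varepsilon$ contains infinitely many integer points, then pass to the one-sided set $L(c,\lambda_0,\varepsilon)$ via symmetry and a boundedness argument. The difference lies in how the ``infinitely many'' claim for the full cylinder is established. The paper splits into two cases: if the line $\R c$ already contains a nonzero lattice point the claim is immediate; otherwise it applies Minkowski's theorem repeatedly to cylinders $L(c,\varepsilon_i)$ of strictly decreasing radii, each application producing a new integer point that is excluded by the next, smaller cylinder. You instead apply Minkowski once to a truncated cylinder of radius $\delta=\varepsilon/(n+1)$ to obtain a single nonzero $z\in\Z^d$ with $\operatorname{dist}(z,\R c)<\delta$, and then observe that the multiples $z,2z,\dots,nz$ all lie in the cylinder of radius $\varepsilon$. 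Your route avoids the case distinction entirely and needs only one invocation of Minkowski per $n$, at the cost of first bounding the cylinder to force finite volume; the paper's iterative argument is more direct but requires the extra hypothesis that the line misses $\Z^d\setminus\{\zerovec\}$ in order to keep shrinking. Both are short and self-contained; your aside about the projection of $\Z^d$ onto $c^\perp$ being non-discrete is a nice third alternative that bypasses Minkowski altogether.
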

\begin{proof}
    Let us define $ L(c, \varepsilon) := \setdef{\lambda c \in \R^d}[\lambda \in \R] + B_{\varepsilon} $. Clearly,
    $ L(c, \varepsilon) $ is symmetric with respect to the origin.
    Since $ L(c,0,\varepsilon) \setminus L(c,\lambda_0,\varepsilon) $ is bounded,
    it suffices to show that $ L(c, \varepsilon) $ contains infinitely many integer points.

    Since the latter statement is obviously true if
    \begin{equation}
        \label{eq:nointegerpoints}
        \setdef{ \lambda c }[\lambda \in \R] \cap \Z^d \neq \{ \zerovec \},
    \end{equation}
    we assume that \eqref{eq:nointegerpoints} does not hold. Setting $ \varepsilon_1 := \varepsilon $, by Theorem
    \ref{thm:minkowski}, $ L(c,\varepsilon_1) $ contains a point $ p_1 \in \Z^d \setminus \{ \zerovec \} $. Since
    \eqref{eq:nointegerpoints} does not hold, there exists some $ \varepsilon_2 > 0 $ such that $ L(c,\varepsilon_2)
    \subseteq L(c,\varepsilon_1) $ and $ p_1 \notin L(c,\varepsilon_2) $. Again, by Theorem \ref{thm:minkowski}, $
    L(c,\varepsilon_2) $ also contains a point $ p_2 \in \Z^d \setminus \{ \zerovec \} $. Further, there is also some $
    \varepsilon_3 > 0 $ such that $ L(c,\varepsilon_3) \subseteq L(c,\varepsilon_2) $ and $ p_2 \notin
    L(c,\varepsilon_3) $.  By iterating these arguments, we obtain an infinite sequence $ (\varepsilon_i, p_i) $ such
    that $ p_i \in L(c,\varepsilon_i) \cap \Z^d \subseteq L(c,\varepsilon) \cap \Z^d $ and $ p_i \notin
    L(c,\varepsilon_{i+1}) $ for all $ i > 0 $. In particular, all $ p_i $ are distinct. \qed
\end{proof}

\begin{theorem}
    For $ d \geq 1 $, we have that $ \rc(\{0,1\}^d) = d + 1 $.
\end{theorem}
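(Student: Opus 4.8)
By Lemma~\ref{lem:rc-cube} there is a relaxation for $\{0,1\}^d$ with exactly $d+1$ facets, so it remains to prove that every relaxation $R$ for $\{0,1\}^d$ has at least $d+1$ facets. First I would record that $R \cap \Z^d = \conv(\{0,1\}^d) \cap \Z^d = \{0,1\}^d$ forces $\{0,1\}^d \subseteq R$, hence $[0,1]^d = \conv(\{0,1\}^d) \subseteq R$ by convexity; in particular $R$ is $d$-dimensional. Suppose, for contradiction, that $R$ has at most $d$ facets. Since a bounded $d$-dimensional polyhedron in $\R^d$ has at least $d+1$ facets, $R$ must be unbounded, so it admits a recession direction $c \in \R^d \setminus \{\zerovec\}$. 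As $\{\lambda c : \lambda \ge 0\}$ lies in the recession cone of $R$ and $[0,1]^d \subseteq R$, we obtain
\[
    [0,1]^d + \{\lambda c : \lambda \ge 0\} \subseteq R .
\]

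The plan is to show that this set already contains infinitely many integer points, contradicting the finiteness of $R \cap \Z^d = \{0,1\}^d$. Using the reflection $x_j \mapsto 1 - x_j$ (an affine automorphism of $\R^d$ preserving $\{0,1\}^d$, $\Z^d$, and facet counts, and flipping the sign of the $j$-th coordinate of $c$), we may assume $c_j > 0$ for some $j$. For every $m \in \Z_{\ge 0}$ the translated unit cube $\tfrac{m}{c_j}\, c + [0,1]^d$ is contained in $\{\lambda c : \lambda \ge 0\} + [0,1]^d \subseteq R$, and, being a translate of $[0,1]^d$, it contains an integer point $q_m$ (for instance the coordinatewise rounding-up of $\tfrac{m}{c_j}c$), whose $j$-th coordinate equals $m$. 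Hence the $q_m$, $m \in \Z_{\ge 0}$, are pairwise distinct, so $R$ contains infinitely many integer points — the desired contradiction. (One could instead argue in the spirit of Corollary~\ref{cor:minkowski}, arranging for a suitable $c$ that $R$ contains a whole set $L(c,\lambda_0,\varepsilon)$ and then invoking that corollary; the elementary argument above seems cleaner and dispenses with Minkowski's theorem here.)

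Combining this with the upper bound of Lemma~\ref{lem:rc-cube} yields $\rc(\{0,1\}^d) = d+1$. The step I expect to require the most care is the one producing infinitely many integer points: it is essential to use that $R$ contains the \emph{solid} cube $[0,1]^d$ rather than only its vertices, in order to absorb the ``perpendicular offset'' of the ray $\{\lambda c : \lambda \ge 0\}$ relative to $\Z^d$ and to keep a full-dimensional region inside $R$ as $\lambda \to \infty$; for instance, for $R = \{x \in \R^d : 0 \le x_1 \le 1\}$ a fixed-width tube around such a ray can entirely miss $\Z^d$, even though $R$ is still not a relaxation of $\{0,1\}^d$.
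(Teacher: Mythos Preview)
Your argument is correct, and it takes a genuinely different (and more elementary) route than the paper's. The paper normalizes to $c \ge \zerovec$ by reflections and then argues by induction on $d$: if $c > \zerovec$ it finds an $\varepsilon$-tube $L(c,\lambda_0,\varepsilon)$ inside $[0,1]^d + \{\lambda c:\lambda\ge 0\}$ and invokes Corollary~\ref{cor:minkowski} (hence Minkowski's theorem) to produce infinitely many integer points, while if some $c_i = 0$ it intersects with the hyperplane $x_i = 0$ and recurses in dimension $d-1$. You bypass both the induction and Minkowski by noting that every translate $\tfrac{m}{c_j}c + [0,1]^d$ of the closed unit cube contains an integer point (the componentwise ceiling of its lower corner), and that its $j$-th coordinate is exactly $m$, so these points are pairwise distinct; this needs only $c_j > 0$ for a single $j$, so no case distinction on the zero pattern of $c$ is required. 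Your approach is shorter and self-contained for this theorem; the paper's detour through Corollary~\ref{cor:minkowski} pays off only because that corollary is reused later (for the simplex in Proposition~\ref{prop:lower-bound-simplex}), where no ambient unit cube is available. One small wording point: your claim that ``$q_m$ has $j$-th coordinate equal to $m$'' is correct for the specific choice $q_m = \lceil \tfrac{m}{c_j}c\rceil$ you give, but not for an arbitrary integer point of the translated cube (which could have $j$-th coordinate $m$ or $m+1$); since you fix that choice explicitly, the argument stands.
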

\begin{proof}
    By Lemma \ref{lem:rc-cube}, we already know that $ \rc(\{0,1\}^d) \leq d + 1 $. Suppose there is a relaxation $ R
    \subseteq \R^d $ for $ \{0,1\}^d $ with less than $ d + 1 $ facets. As mentioned above, since $ \dim(R) \geq
    \dim(\{0,1\}^d) = d $, $ R $ has to be unbounded.

    By induction over $ d \geq 1 $, we will show that any unbounded polyhedron $ R \subseteq \R^d $ with $ \{0,1\}^d
    \subseteq R $ contains infinitely many integer points. Hence, it cannot be a relaxation of $ \{0,1\}^d $. Clearly,
    our claim is true for $ d = 1 $. For $ d \geq 1 $, let $ c \in \R^d \setminus \{ \zerovec \} $ be a direction such
    that $ x + \lambda c \in R $ for any $ x \in R $ and $ \lambda \geq 0 $. Since $ \{0,1\}^d $ is invariant under
    affine maps that map a subset of coordinates $ x_i $ to $ 1-x_i $, we may assume that $ c \geq \zerovec $.

    If $ c > \zerovec $, then there is some $ \lambda_0 > 0 $ such that $ \lambda_0 c $ is in the interior of $ [0,1]^d
    $. Thus,
    there is some $ \varepsilon > 0 $ such that $ \lambda_0 c + B_{\varepsilon} \subseteq [0,1]^d \subseteq R $. By the
    definition of $ c $ and $ \varepsilon $, we thus obtained that $ L(c,\lambda_0,\varepsilon) \subseteq R $. By
    Corollary \ref{cor:minkowski}, it follows that $ L(c,\lambda_0) $ contains infinitely many integer points and so
    does $ R $.

    Otherwise, we may assume that $ c_d = 0 $. Let $ \mathcal{H}_d := \{x \in \R^d : x_d = 0 \} $ and $ p \colon
    \mathcal{H}_d \to \R^{d-1} $ be the projection onto the first $ d-1 $ coordinates. Then, the polyhedron $ \tilde{R}
    = p(R \cap \mathcal{H}_d) $ is still unbounded  and contains $ \{0,1\}^{d-1} = p(\{0,1\}^d) $. By induction, $
    \tilde{R} $ contains
    infinitely many integer points and so does $ R $. \qed
\end{proof}

\noindent
With Proposition \ref{prop:binary} we thus obtain:

\begin{corollary}
    \label{cor:binary}
    Let $ X \subseteq \{0,1\}^d $. Then $ \rc(X) \leq 2^{d-1} + d + 1 $. \qed
\end{corollary}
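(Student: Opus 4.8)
The plan is to derive this immediately from the two results that precede it, with no new work. Proposition~\ref{prop:binary} already states that $\rc(X) \le 2^{d-1} + \rc(\{0,1\}^d)$ for every $X \subseteq \{0,1\}^d$, and the theorem just proven gives $\rc(\{0,1\}^d) = d+1$. Substituting the latter into the former yields $\rc(X) \le 2^{d-1} + d + 1$, which is exactly the claimed bound. So the ``proof'' is a single line combining the two facts; I would simply write ``Combine Proposition~\ref{prop:binary} with $\rc(\{0,1\}^d) = d+1$.''

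If one wants to recall why the ingredients hold: the term $2^{d-1}$ comes from Jeroslow's result that $X$ can always be separated from $\{0,1\}^d \setminus X$ by at most $2^{d-1}$ linear inequalities, so there is a polyhedron $P$ with $P \cap \{0,1\}^d = X$ using that many facets. Such a $P$ is not yet a relaxation because it may contain integer points outside $\{0,1\}^d$; intersecting $P$ with any relaxation of $\{0,1\}^d$ kills exactly those points without losing any point of $X$ or gaining an unwanted one, and it adds only $\rc(\{0,1\}^d)$ further inequalities. The nontrivial input here is the bound $\rc(\{0,1\}^d) \le d+1$, whose content is the explicit perturbed description in Lemma~\ref{lem:rc-cube}; only the \emph{upper} bound on $\rc(\{0,1\}^d)$ is needed for the corollary, so the Minkowski-theorem machinery (Theorem~\ref{thm:minkowski} and Corollary~\ref{cor:minkowski}) used for the matching lower bound does not enter.

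Consequently there is no real obstacle to overcome: the statement is a bookkeeping corollary, and all the genuine difficulty was already absorbed into Lemma~\ref{lem:rc-cube} (for the cube bound) and into Jeroslow's argument (for the $2^{d-1}$ term). The only thing to be mildly careful about is that $X$ may be empty or all of $\{0,1\}^d$; in those degenerate cases the bound is trivially satisfied, and otherwise the argument above applies verbatim.
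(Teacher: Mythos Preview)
Your proposal is correct and matches the paper's own reasoning exactly: the corollary is obtained simply by substituting the value $\rc(\{0,1\}^d)=d+1$ from the preceding theorem into Proposition~\ref{prop:binary}. There is nothing further to add.
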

 
\makeatletter{}\section{Lower Bounds}
\label{sec:lower-bounds}

\noindent
The technique used to obtain a lower bound on the relaxation complexity of the cube is apparently useless to prove
exponential lower bounds for $ \rc(\STSP_n) $ since it only provides a lower bound of at most $ d + 1$.
Therefore, let us introduce another simple framework to provide lower
bounds on the relaxation complexity for polyhedral sets $ X \subseteq \Z^d $.

\begin{definition}
    Let $ X \subseteq \Z^d $. A set $ H \subseteq \aff(X) \cap \Z^d \setminus \conv (X) $ is called a \emph{hiding set} for $ X
    $ if for any two distinct points $ a,b \in H $ we have that $ \conv \{a,b\} \cap \conv (X) \neq \emptyset $.
\end{definition}

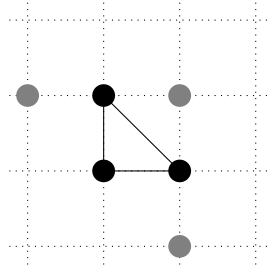
\begin{figure}
    \begin{center}
        \makeatletter{}\def\radius{0.15}

\begin{tikzpicture}
    \tikzstyle{simp} = [fill=black];
    \tikzstyle{hid} = [fill=black!50];

        \clip (-1.25,-1.25) rectangle (2.25,2.25);
    \draw[dotted] (-2,-2) grid (3,3);

        \draw (0,0) -- (1,0) -- (0,1) -- cycle;
    \fill[simp] (0,0) circle (\radius);
    \fill[simp] (1,0) circle (\radius);
    \fill[simp] (0,1) circle (\radius);

        \fill[hid] (1,1) circle (\radius);
    \fill[hid] (-1,1) circle (\radius);
    \fill[hid] (1,-1) circle (\radius);
\end{tikzpicture}
 
    \end{center}
    \caption{Hiding set (gray) for the vertices of the standard 2-simplex (black).}
    \label{fig:hiding-set-simplex}
\end{figure}

\begin{proposition}
    \label{prop:hiding-set}
    Let $ X \subseteq \Z^d $ be polyhedral and $ H \subseteq \aff(X) \cap \Z^d \setminus X $ a hiding set for $ X $.
    Then, $ \rc(X) \geq |H| $.
\end{proposition}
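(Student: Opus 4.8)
The plan is to fix an arbitrary relaxation $ R $ for $ X $ and build an injection from the hiding set $ H $ into the set of facets of $ R $. Since this works for every relaxation, it gives $ \rc(X) \geq |H| $ directly.

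Two preliminary observations do most of the work. First, a relaxation contains the whole convex hull: from $ R \cap \Z^d = \conv(X) \cap \Z^d $ we get $ X \subseteq R $, and since $ R $ is convex this yields $ \conv(X) \subseteq R $, hence also $ \aff(X) \subseteq \aff(R) $. Second, no point of a hiding set lies in $ R $: if $ h \in H $, then $ h \in \Z^d \setminus \conv(X) $, so $ h \notin \conv(X) \cap \Z^d = R \cap \Z^d $, and integrality of $ h $ then forces $ h \notin R $. Combining the two, $ h $ lies in $ \aff(R) $ but outside $ R $, so $ h $ must violate at least one of the facet-defining inequalities of $ R $ (working, if $ R $ is not full-dimensional, inside $ \aff(R) $, where an irredundant description has exactly one inequality per facet). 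For each $ h \in H $ pick one such inequality, which defines a map $ \phi $ from $ H $ to the facets of $ R $.

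The crux is that $ \phi $ is injective. Suppose $ a \neq b $ in $ H $ had $ \phi(a) = \phi(b) $, cut out by some $ \langle c, x \rangle \leq \gamma $; then $ \langle c, a \rangle > \gamma $ and $ \langle c, b \rangle > \gamma $. Every point of the segment $ \conv\{a,b\} $ is a convex combination of these two values and hence also strictly exceeds $ \gamma $, so $ \conv\{a,b\} \cap R = \emptyset $. But the hiding-set property supplies a point $ p \in \conv\{a,b\} \cap \conv(X) $, and $ \conv(X) \subseteq R $ puts $ p \in R $ — a contradiction. Hence $ \phi $ is injective and $ R $ has at least $ |H| $ facets.

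I do not expect a genuine obstacle; the argument is elementary convexity. The two points needing a little care are exactly the ones flagged above: justifying $ \conv(X) \subseteq R $ (this is what lets us conclude that $ h $ violates an inequality rather than merely lies outside $ \aff(R) $, and also what lets us conclude $ p \in R $ even though the intersection point $ p $ need not be integral), and being precise about the meaning of "facet-defining inequality" when $ R $ is lower-dimensional. As a byproduct, the argument shows that a hiding set for a polyhedral $ X $ is necessarily finite.
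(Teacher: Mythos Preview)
Your proof is correct and follows essentially the same argument as the paper: show that every point of $H$ lies in $\aff(R)\setminus R$ and hence violates some facet-defining inequality, then use the hiding-set property together with $\conv(X)\subseteq R$ to conclude that no two points of $H$ can violate the same one. Your write-up is slightly more explicit (spelling out $\conv(X)\subseteq R$ and framing the conclusion as an injection), but the substance is identical.
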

\begin{proof}
    Let $ R \subseteq \R^d $ be a relaxation for $ X $. Since $ H \subseteq \aff(X) \subseteq \aff(R) $, any point in $
    H $ must be separated from $ X $ by a facet-defining inequality of $ R $. 

    Suppose that a facet-defining inequality $ \langle \alpha,x \rangle \leq \beta $ of $ R $ is
    violated by two distinct points $ a,b \in H $. Since $ H $ is a hiding set, there exists a point $ x \in \conv
    \{a,b\} \cap \conv (X) $. Clearly, $ x $ does also violate $ \langle \alpha,x \rangle \leq \beta $, which is a
    contradiction since $ \langle \alpha,x \rangle \leq \beta $ is valid for $ R \supseteq \conv (X) $.

    Thus, any facet-defining inequality of $ R $ is violated by at most one point in $ H $. Hence, $ R $ has at least $
    |H| $ facets. \qed
\end{proof}

\noindent
Let $ \Delta_d := \{ \zerovec, \unitvec_1, \dotsc, \unitvec_d \} \subseteq \{0,1\}^d $ be the vertices of the standard $
d $-simplex. As an example, see Fig. \ref{fig:hiding-set-simplex} illustrating a hiding set for $ \Delta_2 $, which
yields a simple proof of the fact that any relaxation for these points must have at least three facets. Unfortunately,
it turns out that we cannot construct larger hiding sets for any $ \Delta_d $.

\begin{proposition}
    Any hiding set for $ \Delta_d $ has cardinality at most $ 3 $.
\end{proposition}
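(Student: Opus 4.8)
The plan is to use that $\conv(\Delta_d)$ is the standard simplex $\{x \in \R^d : x \ge \zerovec,\ \sum_{i=1}^d x_i \le 1\}$, whose $d+1$ facets are cut out by the inequalities $x_i \ge 0$ ($i \in [d]$) and $\sum_i x_i \le 1$. Since the $d+1$ points of $\Delta_d$ are affinely independent, $\aff(\Delta_d) = \R^d$, so a hiding set $H$ for $\Delta_d$ is simply a set of integer points outside this simplex such that the segment joining any two of them meets the simplex. Note that every point $a \in H$ violates at least one facet: either $a_i \le -1$ for some $i$, or --- $a$ being an integer point in the nonnegative orthant lying outside the simplex --- $\sum_i a_i \ge 2$.

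First I would isolate two elementary consequences of the hiding property. (1) \emph{At most one point of $H$ lies in the nonnegative orthant:} two such points $a,b$ would both satisfy $\sum_i (\cdot) \ge 2$, and then every point of $\conv\{a,b\}$ lies in the orthant with coordinate sum $\ge 2$, hence outside $\conv(\Delta_d)$ --- impossible. (2) \emph{If $a,b \in H$ are distinct and $a_i \le -1$, then $b_i \ge 1$:} writing the point of $\conv(\Delta_d)$ on the segment as $p = (1-t)a + tb$ with $t \in (0,1)$ (here $t \notin \{0,1\}$ since $a,b \notin \conv(\Delta_d)$), nonnegativity of $p_i$ gives $0 \le (1-t)a_i + t b_i \le -(1-t) + t b_i$, so $t b_i \ge 1-t > 0$ and thus $b_i \ge 1$. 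In particular the negative supports $N(a) := \{i : a_i < 0\}$ of distinct points of $H$ are pairwise disjoint.

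The main step is then a short counting argument. Suppose, for contradiction, that $H$ contains four distinct points. By (1), at least three of them have a negative coordinate; pick two of these, $c$ and $d$, and let $a,b$ be the two remaining points of the four. Choose $i \in N(c)$ and $j \in N(d)$; by disjointness $i \ne j$. Now apply the hiding property to $a$ and $b$: there is a point $p = (1-t)a + tb \in \conv(\Delta_d)$. Since $c_i \le -1$, applying (2) to the pairs $(c,a)$ and $(c,b)$ yields $a_i, b_i \ge 1$, hence $p_i \ge 1$; likewise $d_j \le -1$ yields $a_j, b_j \ge 1$, hence $p_j \ge 1$. As $p \ge \zerovec$ and $i \ne j$, we obtain $\sum_\ell p_\ell \ge p_i + p_j \ge 2 > 1$, contradicting $p \in \conv(\Delta_d)$. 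Hence $|H| \le 3$.

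The one point that takes a moment to get right is \emph{which} segment to use: rather than joining two of the points with negative entries, one looks at the segment between the two leftover points and uses the negativity of the \emph{other} two points to push two coordinates of the intersection point up to $1$, which is what forces its coordinate sum above $1$. Once this is set up, everything else is immediate and uniform --- in particular it does not matter whether $a$ or $b$ themselves lie in the orthant --- so no separate case distinction is needed.
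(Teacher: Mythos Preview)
Your argument is correct and follows essentially the same route as the paper: at most one point of $H$ lies in the nonnegative orthant, a negative coordinate in one point forces that coordinate to be $\ge 1$ in every other point, and with four points one finds two distinct indices $i,j$ along which the segment between the remaining pair has both coordinates $\ge 1$, contradicting membership in the simplex. If anything, you are slightly more careful than the paper in making the disjointness of the negative supports explicit (the paper simply asserts the existence of $i\ne j$). One cosmetic point: naming one of the four points $d$ clashes with the ambient dimension $\Delta_d$; choose a different letter.
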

\begin{proof}
    Let $ H $ be any hiding set for $ \Delta_d $. Then, for any of the inequalities $ x_i \geq 0 $, $ i \in [d] $ as
    well as $ \sum_{i=1}^d x_i \leq 1 $, there exists at most one point in $ H $ violating it. In particular, at most
    one of the points in $ H $ lies in the nonnegative orthant.

    Let us assume that $ |H| \geq 4 $. Then there are distinct points $ a, b, p, q \in H $ with $ a_i < 0 $ and $ b_j <
    0 $ for some $ i, j \in [d] $ with $ i \neq j $. Since $ \lambda a + (1 - \lambda) p \in \Delta \subseteq \R^d_+ $ for
    some $ \lambda \in [0,1]^d $, we must have $ p_i > 0 $. Since $ p_i \in \Z $, it follows that $ p_i \geq 1 $.
    Analogously, we obtain $ p_j, q_i, q_j \geq 1 $.

    Consider now any point $ y = \lambda p + (1-\lambda) q \in \conv \{p,q\} $ with $ \lambda \in [0,1] $.  Note that $
    x_i + x_j \leq 1 $ is a valid inequality for all points $ x \in \conv \Delta_d $. But since $ p_i, p_j, q_i, q_j
    \geq 1 $, it is easy to see that $ y_i + y_j \geq 2 $ and hence $ y \notin \conv \Delta_d $. Thus, we obtain that $
    \conv \{p,q\} \cap \conv \Delta_d = \emptyset $, a contradiction to $ H $ being a hiding set for $ \Delta_d $. \qed
\end{proof}
This observation shows that the hiding set bound has its limitations. (As a consequence of Proposition
\ref{prop:lower-bound-simplex}, we will see that $ \rc(\Delta_d) $ indeed grows in $ d $.)

Nevertheless, in what follows, we will demonstrate that this bound is a powerful tool to provide exponential lower
bounds on the relaxation complexities of numerous interesting sets $ X $. By dividing these sets into three
classes, we try to identify general structures that are hard to model in the context of relaxations.

\makeatletter{}\subsection{Connectivity and Acyclicity}

\noindent
In many IP-formulations for practical applications, the feasible solutions are subgraphs that are required to be  
connected or
acyclic. Quite often in these cases, 
there are polynomial size IP-formulations that use auxiliary variables. For instance, for the \emph{spanning tree polytope}  there  are even 
polynomial size extended formulations \cite{Martin91}  that can be adapted to also
work for the \emph{connector polytope} $ \CONN_n $ (see below). 
In contrast, we give exponential lower bounds on the relaxation complexities of
some important representatives of this structural class.

\makeatletter{}\subsubsection{STSP \& ATSP}

As a first application of the hiding set bound, we will show that the subtour relaxation for $ \STSP_n $ has indeed
asymptotically smallest size (in the exponential sense), i.e., that $ \rc(\STSP_n) = 2^{\Theta(n)} $ holds.
In fact, we will also give an exponential lower
bound for the directed version $ \ATSP_n \subseteq \{0,1\}^{A_n} $, which is the set of characteristic vectors of
directed hamiltonian cycles in the complete directed graph on $ n $ nodes whose arcs we denote by $ A_n $.

We will first construct a large hiding set for $ \ATSP_n $. Towards this end,
let $ n = 2(N+1) $ for some integer $ N \ge 0 $ and let us consider the complete directed graph on the node set
\[
    V := \setdef{ v_1, \dotsc, v_{N+1}, w_1, \dotsc, w_{N+1} }
\]
of cardinality~$n$.
For a binary vector $ b \in \{0,1\}^N $ let us further define the arc set
\begin{align*}
    \mathcal{E}_b := & \big\{ (v_{N+1},v_1), \, (w_{N+1},w_1) \big\} \\
    & \cup \bigcup_{i:b_i = 0} \big\{ ( v_i,v_{i+1} ), \, ( w_i,w_{i+1} ) \big\} \ \cup \
    \bigcup_{i:b_i = 1} \big\{ ( v_i,w_{i+1} ), \, ( w_i,v_{i+1} ) \big\},
\end{align*}
see Fig. \ref{fig:hamburger} for an example.
Note that $ \mathcal{E}_b $ is a directed hamiltonian cycle on the node set $ V $ if and only if $ \sum_{i=1}^N b_i $ is
odd.
Thus,  the set
\[
    H_N := \setdef{ \chi(\mathcal{E}_b) }[b \in \{0,1\}^N, \, \sum_{i=1}^N b_i \text{ is even}]
\]
is clearly disjoint from $ \ATSP_{2(N+1)} $.

In this section, we will only consider graphs on these $ 2(N+1) $ nodes.
It is easy to transfer the following observations to complete graphs with an odd number of nodes 
by replacing arc $ ( v_{N+1}, v_1 ) $ in $ \mathcal{E}_b $ by a directed path including one additional node.
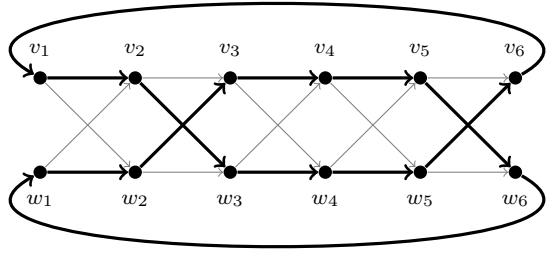
\begin{figure}
    \begin{center}
        \makeatletter{}\begin{tikzpicture}[scale=1.25]
    \tikzstyle{vertex} = [fill,circle,inner sep=0pt,minimum size=5pt];
    \tikzstyle{arc} = [color=gray,arrows=->];
    \tikzstyle{usedarc} = [very thick,arrows=->];

        \foreach \i in {1,...,6}{
        \node[vertex] (v\i) at (\i,1) {};
        \node[vertex] (w\i) at (\i,0) {};
    }

        \foreach \i in {1,...,5}{
        \pgfmathtruncatemacro{\iplusone}{\i + 1}
        \draw[arc] (v\i) -- (v\iplusone);
        \draw[arc] (w\i) -- (w\iplusone);
        \draw[arc] (v\i) -- (w\iplusone);
        \draw[arc] (w\i) -- (v\iplusone);
    }

        \draw[usedarc] (v6) to[out=30,in=150] (v1);
    \draw[usedarc] (w6) to[out=-30,in=-150] (w1);
    \draw[usedarc] (v1) -- (v2);
    \draw[usedarc] (w1) -- (w2);
    \draw[usedarc] (v2) -- (w3);
    \draw[usedarc] (w2) -- (v3);
    \draw[usedarc] (v3) -- (v4);
    \draw[usedarc] (w3) -- (w4);
    \draw[usedarc] (v4) -- (v5);
    \draw[usedarc] (w4) -- (w5);
    \draw[usedarc] (v5) -- (w6);
    \draw[usedarc] (w5) -- (v6);

        \foreach \i in {1,...,6}{
        \node at (\i,1.3) {$v_\i$};
        \node at (\i,-0.3) {$w_\i$};
    }
\end{tikzpicture}
 
    \end{center}
    \caption{Construction of the set $ \mathcal{E}_b $ for $ b = (0,1,0,0,1) $.}
    \label{fig:hamburger}
\end{figure}
\begin{lemma}
    \label{lem:hiding-set-atsp}
    $ H_N $ is a hiding set for $ \ATSP_{2(N+1)} $.
\end{lemma}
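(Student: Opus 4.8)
The plan is to verify the two requirements in the definition of a hiding set for $ X := \ATSP_{2(N+1)} $, writing $ n := 2(N+1) $: (i) every $ \chi(\mathcal{E}_b) \in H_N $ lies in $ \aff(X) \cap \Z^{A_n} $ but outside $ \conv(X) $, and (ii) for any two distinct $ \chi(\mathcal{E}_a), \chi(\mathcal{E}_b) \in H_N $ the segment $ \conv\{\chi(\mathcal{E}_a),\chi(\mathcal{E}_b)\} $ intersects $ \conv(X) $.

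Requirement (i) is the easy part. Each $ \chi(\mathcal{E}_b) $ is a $ 0/1 $-vector, hence integral, and since $ \sum_i b_i $ is even it is not the characteristic vector of a directed hamiltonian cycle, so $ \chi(\mathcal{E}_b) \notin X $; because $ X \subseteq \{0,1\}^{A_n} $ and $ \chi(\mathcal{E}_b) $ is a vertex of $ [0,1]^{A_n} $, this already forces $ \chi(\mathcal{E}_b) \notin \conv(X) $. For the affine hull I would observe that in $ \mathcal{E}_b $ every node has exactly one outgoing and one incoming arc, so $ \chi(\mathcal{E}_b) $ satisfies all degree equations $ x(\delta^+(v)) = 1 $, $ x(\delta^-(v)) = 1 $, which for $ n $ large enough are precisely the equations defining $ \aff(X) $ (the asymmetric travelling salesman polytope has the standard dimension $ n^2-3n+1 $); the few small cases are irrelevant for the asymptotics and can be checked by hand. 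Alternatively, once (ii) is available one obtains $ \chi(\mathcal{E}_a) \in \aff(X) $ for $ |H_N|\ge 3 $ from the affine identity $ \chi(\mathcal{E}_a) = \tfrac12(\chi(\mathcal{E}_a)+\chi(\mathcal{E}_b)) + \tfrac12(\chi(\mathcal{E}_a)+\chi(\mathcal{E}_c)) - \tfrac12(\chi(\mathcal{E}_b)+\chi(\mathcal{E}_c)) $, whose three midpoints lie in $ \conv(X)\subseteq\aff(X) $ by (ii).

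The substantial step is (ii). The key is the ``position-by-position'' decomposition of $ \mathcal{E}_c $: besides the two fixed arcs $ (v_{N+1},v_1),(w_{N+1},w_1) $, the set $ \mathcal{E}_c $ uses, at position $ i\in[N] $, the arc pair $ \{(v_i,v_{i+1}),(w_i,w_{i+1})\} $ if $ c_i=0 $ and the pair $ \{(v_i,w_{i+1}),(w_i,v_{i+1})\} $ if $ c_i=1 $, and these $ N $ four-arc blocks together with the two fixed arcs are supported on pairwise disjoint arc sets. Consequently $ \chi(\mathcal{E}_a)+\chi(\mathcal{E}_b)=\chi(\mathcal{E}_c)+\chi(\mathcal{E}_d) $ as soon as the multisets $ \{a_i,b_i\} $ and $ \{c_i,d_i\} $ agree for every $ i $. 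So, given distinct even-weight $ a,b $, I would let $ S\neq\emptyset $ be the set of coordinates in which they differ (which then has even cardinality, so $ |S|\ge 2 $) and choose $ c,d $ to equal $ a $ outside $ S $ and to be coordinatewise complementary to one another on $ S $. Then $ \sum_i c_i + \sum_i d_i = 2\sum_{i\notin S}a_i + |S| $ is even, so $ \sum_i c_i $ and $ \sum_i d_i $ have the same parity, and since $ |S|\ge 2 $ the freedom in how many of the $ 1 $'s of $ c $ fall inside $ S $ lets me make both sums odd; thus $ \mathcal{E}_c,\mathcal{E}_d $ are hamiltonian cycles, hence $ \tfrac12(\chi(\mathcal{E}_a)+\chi(\mathcal{E}_b))=\tfrac12(\chi(\mathcal{E}_c)+\chi(\mathcal{E}_d))\in\conv(X) $, and this point lies on the segment between $ \chi(\mathcal{E}_a) $ and $ \chi(\mathcal{E}_b) $. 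The one place that needs a little care is this final parity bookkeeping — arranging that \emph{both} $ \sum_i c_i $ and $ \sum_i d_i $ become odd simultaneously — which works out precisely because $ a $ and $ b $ have equal weight parity and differ in at least two coordinates; everything else (the block decomposition and the two claims of part (i)) is routine.
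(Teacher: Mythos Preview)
Your proof is correct and follows the same route as the paper: check the degree equations to place $H_N$ in $\aff(\ATSP_{2(N+1)})$, then realise the midpoint of $\chi(\mathcal{E}_a)$ and $\chi(\mathcal{E}_b)$ as the midpoint of two tour-vectors $\chi(\mathcal{E}_c),\chi(\mathcal{E}_d)$ by reshuffling bits on coordinates where $a$ and $b$ differ. The paper's choice of $c,d$ is a bit slicker than yours --- it fixes a single index $j$ with $a_j\neq b_j$ and flips just that one bit in each of $a$ and $b$, which immediately makes both sums odd and preserves the multiset $\{a_j,b_j\}=\{c_j,d_j\}$, so your parity bookkeeping over all of $S$ is unnecessary.
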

\begin{proof}
    First, note that
    \[
        H_N \subseteq \aff(\ATSP_{2(N+1)}) = \setdef{x \in \R^A}[x(\delta^{\mathrm{in}}(v)) = x(\delta^{\mathrm{out}}(v))
        = 1, \ \forall \, v \in V ]
    \]
    holds,
    where $ A $ is the set of arcs in the complete directed graph on~$V$. Let $ b, b' \in
    \{0,1\}^N $ be distinct with $ \sum_{i=1}^N b_i $ even and $ \sum_{i=1}^N b'_i $ even.
    Let $ j \in [N] $ be an index with $ b_j \neq b'_j $.
    Consider the binary vectors $ c := b + (1 - b_j) \unitvec_j $ and $ c' := b' + (1 - b'_j) \unitvec_j $.
    Clearly, we have that $ \sum_{i=1}^N c_i $ is odd and $ \sum_{i=1}^N c'_i $ is odd and hence $ \chi(\mathcal{E}_c) $
    and $ \chi(\mathcal{E}_{c'}) $ are both contained in $ \ATSP_{2(N+1)} $.
    Finally, it is easy to check that
    \[
        \chi(\mathcal{E}_b) + \chi(\mathcal{E}_{b'}) = \chi(\mathcal{E}_c) + \chi(\mathcal{E}_{c'})
    \]
    holds and hence $ \conv(\{ \chi(\mathcal{E}_b), \chi(\mathcal{E}_{b'})\} ) \cap \conv(\ATSP_{2(N+1)}) \neq \emptyset $.
\end{proof}

\begin{theorem}
    The asymptotic growth of $ \rc(\ATSP_n) $ and $ \rc(\STSP_n) $ is $ 2^{\Theta(n)} $.
\end{theorem}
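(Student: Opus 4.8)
The plan is to prove the two-sided estimate by giving matching bounds $\rc(\ATSP_n),\rc(\STSP_n)=2^{O(n)}$ and $\rc(\ATSP_n),\rc(\STSP_n)=2^{\Omega(n)}$; the lower bounds come from hiding sets via Proposition~\ref{prop:hiding-set}, the upper bounds from the subtour relaxations themselves.

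For the upper bound I would exhibit explicit relaxations of size $2^{O(n)}$. In the undirected case, let $P\subseteq\R^{E_n}$ be cut out by the degree equations $x(\delta(v))=2$ for $v\in V_n$, the subtour inequalities $x(\delta(S))\ge 2$ for $\emptyset\neq S\subsetneq V_n$, and the box constraints $0\le x_e\le 1$; in the directed case use $x(\delta^{\mathrm{in}}(v))=x(\delta^{\mathrm{out}}(v))=1$, $x(\delta^{\mathrm{in}}(S))\ge 1$, and $0\le x_a\le 1$. The thing to check is that $P$ really is a relaxation in our sense: an integer point of $P$ has $0/1$ entries and is the characteristic vector of a spanning subgraph in which every node has degree two (resp.\ in- and out-degree one), hence a vertex-disjoint union of (directed) cycles; were there more than one cycle, the vertex set $S$ of one of them would violate the subtour inequality for $S$, so the subgraph is a single (directed) Hamiltonian cycle and $P\cap\Z^{E_n}=\STSP_n$ (resp.\ $P\cap\Z^{A_n}=\ATSP_n$). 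Since $\delta(S)=\delta(V_n\setminus S)$, there are at most $2^{n-1}$ distinct subtour inequalities in the undirected case and at most $2^n$ in the directed case, so $P$ has at most $2^{n-1}+O(n^2)$ facets; thus $\rc(\STSP_n),\rc(\ATSP_n)\le 2^{O(n)}$.

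For the lower bound on $\rc(\ATSP_n)$ with $n=2(N+1)$ I would simply combine Lemma~\ref{lem:hiding-set-atsp} with Proposition~\ref{prop:hiding-set}: the hiding set $H_N$ consists of the vectors $\chi(\mathcal{E}_b)$ for even-weight $b\in\{0,1\}^N$, of which there are $2^{N-1}$, so $\rc(\ATSP_n)\ge 2^{N-1}=2^{n/2-2}$; for odd $n$ the remark above (inserting one extra node on the arc $(v_{N+1},v_1)$) gives the same bound up to a constant in the exponent. For $\rc(\STSP_n)$ I would carry out the analogous ``hamburger'' construction with undirected edges: define $\mathcal{E}_b$ by replacing each arc of the directed construction by its underlying undirected edge, so that $\mathcal{E}_b$ is still a $2$-regular spanning subgraph of $K_{2(N+1)}$ and is a Hamiltonian cycle exactly when $\sum_i b_i$ is odd. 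Then $H_N:=\{\chi(\mathcal{E}_b):b\in\{0,1\}^N,\ \sum_i b_i\text{ even}\}$ lies in $\aff(\STSP_{2(N+1)})=\{x:x(\delta(v))=2\ \forall v\}$ and is disjoint from $\STSP_{2(N+1)}$; and for distinct even-weight $b,b'$, picking $j$ with $b_j\neq b_j'$ and letting $c,c'$ be the vectors obtained by flipping the $j$-th coordinate (so $c,c'$ have odd weight and $\chi(\mathcal{E}_c),\chi(\mathcal{E}_{c'})\in\STSP_{2(N+1)}$), the two difference vectors $\chi(\mathcal{E}_c)-\chi(\mathcal{E}_b)$ and $\chi(\mathcal{E}_{c'})-\chi(\mathcal{E}_{b'})$ are negatives of one another (both are $\pm$ the same $\{-1,0,1\}$-vector supported on the four edges near column $j$, with opposite signs because $b_j\neq b_j'$), whence $\chi(\mathcal{E}_b)+\chi(\mathcal{E}_{b'})=\chi(\mathcal{E}_c)+\chi(\mathcal{E}_{c'})$ and $\tfrac12(\chi(\mathcal{E}_b)+\chi(\mathcal{E}_{b'}))\in\conv\{\chi(\mathcal{E}_b),\chi(\mathcal{E}_{b'})\}\cap\conv(\STSP_{2(N+1)})$. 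So $H_N$ is a hiding set, Proposition~\ref{prop:hiding-set} gives $\rc(\STSP_n)\ge 2^{\Omega(n)}$, and odd $n$ is handled by the same path trick. Combining the four estimates yields $\rc(\ATSP_n)=\rc(\STSP_n)=2^{\Theta(n)}$.

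The routine parts are the two verifications ``an integer point of the subtour polyhedron is a single Hamiltonian cycle'' and ``the undirected hamburger construction satisfies the hiding-set axioms,'' which run exactly parallel to the directed case. The only place that needs a little care --- and the main obstacle, if any --- is making sure the box constraints (or, equivalently, intersection with a relaxation of the cube as in Lemma~\ref{lem:rc-cube}) are genuinely required to kill spurious integer points with entries outside $\{0,1\}$, while observing that adding them keeps the total facet count at $2^{O(n)}$.
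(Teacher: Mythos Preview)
Your proposal is correct and follows essentially the same approach as the paper: the lower bounds come from the hiding set $H_N$ of Lemma~\ref{lem:hiding-set-atsp} (undirected for $\STSP_n$) together with Proposition~\ref{prop:hiding-set}, and the upper bounds from the subtour-elimination relaxations. You spell out more of the routine verifications (that the subtour polyhedron really is a relaxation, the explicit count $|H_N|=2^{N-1}$, and the undirected hiding-set check), but the argument is the paper's.
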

\begin{proof}
    Lemma \ref{lem:hiding-set-atsp} shows that $ H_N $ is a hiding set for $ \ATSP_n $. By replacing all directed arcs
    with their undirected versions, the set $ H_N $  yields a hiding set for $ \STSP_n $. By Proposition
    \ref{prop:hiding-set}, we obtain a lower bound of $ |H_N| = 2^{\Omega(n)} $ for $ \rc(\ATSP_n) $ and $ \rc(\STSP_n)
    $. To complete the argumentation, note that both $ \ATSP_n $ and $ \STSP_n $ have relaxations of size $
    2^{\Theta(n)} $, which are variants of the formulation in \eqref{eq:subtour-relaxation}. \qed
\end{proof}
 
\makeatletter{}\subsubsection{Connected Sets}
Let $ \CONN_n $ be the set of all characteristic vectors of edge sets that form a connected spanning subgraph in the complete
graph on $ n $ nodes. The polytope
\[
    \setdef{ x \in [0,1]^{E_n} }[ x(\delta(S)) \geq 1 \ \forall \, \emptyset \neq S \subsetneq V_n ]
\]
is a relaxation for $ \CONN_n $. Thus, we have that $ \rc(\CONN_n) \leq \O(2^n) $.

For a lower bound, consider again the undirected version of our set $ H_N $. Since each point in $ H_N $ belongs to a
node-disjoint union of two cycles, we have that $ H_N \cap \CONN_n = \emptyset $. Further, we know that for any $ a, b
\in H_N $ we have that
\[
    \emptyset \neq \conv \{a,b\} \cap \conv (\STSP_n) \subseteq \conv \{a,b\} \cap \conv (\CONN_n)
\]
and since $ H_N \subseteq \aff(\CONN_n) = \R^{E_n} $, we see that $ H_N $ is also a hiding set for $ \CONN_n $. We
obtain:

\begin{corollary}
    The asymptotic growth of $ \rc(\CONN_n) $ is $ 2^{\Theta(n)} $. 
\end{corollary}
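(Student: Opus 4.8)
The plan is to establish matching exponential upper and lower bounds on $\rc(\CONN_n)$, so that $\rc(\CONN_n) = 2^{\Theta(n)}$ follows. All of the genuine work has in fact already been carried out in the $\ATSP$/$\STSP$ analysis; what remains is a transfer argument together with two easy containments and a full-dimensionality check.

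For the upper bound I would exhibit the cut relaxation $R_n := \{\, x \in [0,1]^{E_n} : x(\delta(S)) \ge 1 \ \forall\, \emptyset \neq S \subsetneq V_n \,\}$ and verify that it is a relaxation for $\CONN_n$: since $R_n \subseteq [0,1]^{E_n}$, every integer point of $R_n$ is a $0/1$ vector, and a $0/1$ vector satisfies all the cut inequalities precisely when the corresponding edge set meets every nontrivial cut, i.e., is a connected spanning subgraph; conversely $\conv(\CONN_n) \cap \Z^{E_n} = \CONN_n$. As $R_n$ is described by $2^n - 2$ cut inequalities together with $2|E_n|$ box inequalities, it has $\O(2^n)$ facets, whence $\rc(\CONN_n) \le \O(2^n)$.

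For the lower bound I would reuse the hiding set $H_N$ from the $\ATSP$ analysis in its undirected incarnation, with $n = 2(N+1)$ (and the standard one-extra-node trick for odd $n$, exactly as in the $\ATSP$ section). Three conditions must be checked. First, $H_N \cap \CONN_n = \emptyset$: every $\mathcal{E}_b$ with $\sum_i b_i$ even decomposes into two vertex-disjoint cycles, so its subgraph is disconnected. Second, the hiding property: for distinct $a,b \in H_N$, the proof of Lemma~\ref{lem:hiding-set-atsp}, applied to the undirected $H_N$, already produces a point of $\conv\{a,b\}$ lying in $\conv(\STSP_n)$; since every hamiltonian cycle is a connected spanning subgraph we have $\conv(\STSP_n) \subseteq \conv(\CONN_n)$, so this point also witnesses $\conv\{a,b\} \cap \conv(\CONN_n) \neq \emptyset$. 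Third, $H_N \subseteq \aff(\CONN_n) \cap \Z^{E_n}$: integrality is clear, and $\aff(\CONN_n) = \R^{E_n}$ because $\CONN_n$ is full-dimensional — for $n \ge 3$ the complete graph and each of its single-edge deletions are connected spanning subgraphs, which gives $|E_n| + 1$ affinely independent points in $\CONN_n$. Proposition~\ref{prop:hiding-set} then yields $\rc(\CONN_n) \ge |H_N| = 2^{N-1} = 2^{\Omega(n)}$.

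Combining the two bounds gives $\rc(\CONN_n) = 2^{\Theta(n)}$. I do not expect a real obstacle: the construction of the exponentially large family of non-hamiltonian ``hamburger'' subgraphs whose pairwise segments pass through $\conv(\STSP_n)$ is inherited verbatim from Lemma~\ref{lem:hiding-set-atsp}, and the only new ingredients are the trivial observations $H_N \cap \CONN_n = \emptyset$ and $\conv(\STSP_n) \subseteq \conv(\CONN_n)$. The one point that deserves to be stated with a little care is the full-dimensionality of $\CONN_n$, since it is exactly what guarantees $H_N \subseteq \aff(\CONN_n)$ and hence lets Proposition~\ref{prop:hiding-set} apply.
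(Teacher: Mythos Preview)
Your proposal is correct and follows essentially the same approach as the paper: the upper bound via the cut relaxation with $\O(2^n)$ facets, and the lower bound by observing that the undirected hiding set $H_N$ for $\STSP_n$ is also a hiding set for $\CONN_n$ because its elements are disconnected (two vertex-disjoint cycles), $\conv(\STSP_n)\subseteq\conv(\CONN_n)$, and $\aff(\CONN_n)=\R^{E_n}$. The paper simply asserts the full-dimensionality of $\CONN_n$ without justification, whereas you supply the easy argument via single-edge deletions of $K_n$; otherwise the two arguments coincide.
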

 
\makeatletter{}\subsubsection{Branchings and Forests}

\noindent
Besides connectivity, we show that, in general, it is also hard to force acyclicity in the context of relaxation. Let
therefore $ \ARB_n $ ($ \SPT_n $) be the set of characteristic vectors of arborescences (spanning trees) in the complete
directed (undirected) graph. 

\begin{theorem}
    \label{thm:arb-spt}
    The asymptotic growth of $ \rc(\ARB_n) $ and $ \rc(\SPT_n) $ is $ 2^{\Theta(n)} $.
\end{theorem}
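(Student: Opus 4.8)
The plan is to reuse the ``two disjoint cycles'' construction underlying the hiding set $H_N$ for $\STSP_n$, but to trade the Hamiltonicity parity condition for a parity condition that detects the presence of a cycle (equivalently, the failure of acyclicity). The key point is that the arc sets $\mathcal{E}_b$ are always a disjoint union of two directed cycles on the vertex set $V$; so every $\chi(\mathcal{E}_b)$ lies outside $\ARB_n$ (an arborescence is in particular acyclic), and this is true for \emph{all} $b\in\{0,1\}^N$, not only those of even weight. So the first step is to decide which sub-collection of the $\chi(\mathcal{E}_b)$ to take as the hiding set. I would keep exactly the same set $H_N=\{\chi(\mathcal{E}_b):b\in\{0,1\}^N,\ \sum_i b_i\text{ even}\}$, restricted appropriately to the undirected setting for $\SPT_n$.

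The second step is to verify the hiding-set property for $\ARB_n$. This is where the argument must be adapted: in Lemma~\ref{lem:hiding-set-atsp} the midpoint identity $\chi(\mathcal{E}_b)+\chi(\mathcal{E}_{b'})=\chi(\mathcal{E}_c)+\chi(\mathcal{E}_{c'})$ was used, where $c,c'$ have odd weight and hence $\mathcal{E}_c,\mathcal{E}_{c'}$ are Hamiltonian cycles; here I instead need $\mathcal{E}_c$ and $\mathcal{E}_{c'}$ to be \emph{arborescences}, which they are not, since they are single Hamiltonian cycles. The fix is to modify the building blocks: delete one fixed arc (say $(v_{N+1},v_1)$) from $\mathcal{E}_b$ to get a set $\mathcal{E}'_b$, and attach a fixed spanning out-arborescence structure so that $\mathcal{E}'_c$ becomes a genuine arborescence rooted at $v_1$ whenever $\sum_i c_i$ is odd, while $\mathcal{E}'_b$ for even $b$ is still a union of a directed path and a directed cycle, hence not acyclic. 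Concretely: remove $(v_{N+1},v_1)$ and $(w_{N+1},w_1)$ and add the single arc $(v_1,w_1)$; then for odd-weight $c$ one checks the result is a spanning arborescence rooted at $v_1$, for even-weight $b$ it still contains a cycle, and the four-point identity $\chi(\mathcal{E}'_b)+\chi(\mathcal{E}'_{b'})=\chi(\mathcal{E}'_c)+\chi(\mathcal{E}'_{c'})$ still holds coordinatewise (the symmetric-difference bookkeeping is identical to the $\ATSP$ case since we only touched a fixed set of arcs in the same way for all four vectors). One also must check $H_N\subseteq\aff(\ARB_n)\cap\Z^A$; since $\aff(\ARB_n)$ is cut out by the in-degree and total-cardinality equations, and each $\mathcal{E}'_b$ has exactly one arc into each non-root node and $n-1$ arcs total, this is routine. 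The same construction, with directions forgotten, gives a hiding set for $\SPT_n$.

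The third step is the matching upper bound: $\ARB_n$ has a relaxation of size $2^{\Theta(n)}$ given by $x(\delta^{\mathrm{in}}(v))=1$ for all $v\neq r$, $x(\delta^{\mathrm{in}}(r))=0$, $x\ge\zerovec$, together with the subtour-type constraints $x(\delta^{\mathrm{in}}(S))\ge 1$ for all $\emptyset\neq S\subseteq V\setminus\{r\}$ (or equivalently $x(A[S])\le |S|-1$), which is a classical integral description of the arborescence polytope; for $\SPT_n$ one uses the analogous subtour-elimination/spanning-tree relaxation $x(E_n)=n-1$, $x(E_n[S])\le |S|-1$ for all $S$. Combining with Proposition~\ref{prop:hiding-set} and $|H_N|=2^{\Omega(n)}$ (with the standard odd-number-of-nodes modification as in the $\ATSP$ case) yields $\rc(\ARB_n)=\rc(\SPT_n)=2^{\Theta(n)}$.

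The main obstacle I anticipate is exactly the second step: arranging the fixed ``patch'' to $\mathcal{E}_b$ so that it simultaneously (i) turns the odd-weight configurations into honest arborescences/spanning trees, (ii) leaves the even-weight configurations acyclic-free, i.e. still containing a cycle, and (iii) preserves the crucial additive midpoint identity among the four characteristic vectors. Items (i) and (iii) pull in slightly different directions — (iii) wants the patch applied uniformly and symmetrically to $b,b',c,c'$, while (i) wants the patch to interact correctly with the two cycles that $\mathcal{E}_b$ splits into — and getting a single patch that does all three is the only genuinely non-routine part; everything else is a direct transcription of the $\ATSP$/$\STSP$ argument.
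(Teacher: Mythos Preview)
Your overall strategy---modify the arc sets $\mathcal{E}_b$ by a fixed ``patch'' that is applied identically to all four vectors $b,b',c,c'$, so that the midpoint identity survives, odd-weight configurations become arborescences, and even-weight configurations remain non-acyclic---is exactly the paper's approach. The upper bound discussion is also fine.

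However, the concrete patch you propose is wrong. You remove both $(v_{N+1},v_1)$ and $(w_{N+1},w_1)$ and add $(v_1,w_1)$. Trace through the even-weight case: for $b$ with $\sum_i b_i$ even, the original $\mathcal{E}_b$ consists of two disjoint cycles, one through $v_1$ (using $(v_{N+1},v_1)$ as the unique arc into $v_1$) and one through $w_1$ (using $(w_{N+1},w_1)$). Deleting both closing arcs turns these into two vertex-disjoint directed paths, one from $v_1$ to $v_{N+1}$ and one from $w_1$ to $w_{N+1}$. Adding $(v_1,w_1)$ then produces a graph in which $v_1$ has in-degree~$0$, every other node has in-degree~$1$, there are $n-1$ arcs, and there is no cycle: it is a spanning arborescence rooted at $v_1$. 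So your hiding set $H_N$ is not disjoint from $\ARB_n$ at all; in fact $H_N\subseteq\ARB_n$, and the argument collapses. (The odd-weight case also yields an arborescence, so the parity distinction is erased entirely.)

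The paper's fix is simpler and works: remove \emph{only} the arc $(w_{N+1},w_1)$ and add nothing. For odd weight the single Hamiltonian cycle becomes a Hamiltonian directed path (hence a spanning arborescence). For even weight only the $w_1$-cycle is broken into a path while the $v_1$-cycle survives intact, so the result is a disjoint union of a cycle and a path and is not an arborescence. The midpoint identity is preserved because the same single arc is deleted from all four vectors. Your intuition that a uniform patch preserves the identity was correct; you just over-patched. A minor side remark: your description of $\aff(\ARB_n)$ via per-vertex in-degree equations only applies to arborescences with a fixed root; with variable root (as here) one has to argue differently, but after the paper's one-arc deletion each $\mathcal{E}_b$ has exactly $n-1$ arcs, so containment in the affine hull is immediate.
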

\begin{proof}
    First, note that both the \emph{arborescence polytope} and the spanning tree polytope (i.e., $ \conv(ARB_n) $ and $
    \conv(\SPT_n) $) have $ \O(2^n) $ facets \cite{Schrijver03} and hence we have an upper bound of $ \O(2^n) $ for both
    $ \rc(\ARB_n) $ and $ \rc(\SPT_n) $.

    For a lower bound, let us modify the definition of $ \mathcal{E}_b $ by removing arc $ (w_{N+1}, w_1) $.
    Then, if $ b \in \{0,1\}^N $ with $ \sum_{i=1}^N b_i $ even, we have that $ \mathcal{E}_b $ is a
    node-disjoint union of a cycle and a path and hence not an arborescence. By following the proof of
    Lemma \ref{lem:hiding-set-atsp},
    we still have
    \[
        \chi(\mathcal{E}_b) + \chi(\mathcal{E}_{b'}) = \chi(\mathcal{E}_c) + \chi(\mathcal{E}_{c'}),
    \]
    where $ \mathcal{E}_c $ and $ \mathcal{E}_{c'} $ are spanning arborescences. (Actually, they are in fact directed paths
    visiting each node.) Since $ \aff(\ARB_n) = \R^{A_n} $, we therefore obtain that the modified set $ H_N $ is a
    hiding set for $ \ARB_n $. By undirecting all arcs, $ H_N $ also yields a hiding set for $ \SPT_n $.

    Again, by Proposition \ref{prop:hiding-set}, we deduce a lower bound of $ |H_N| = 2^{\Omega(n)} $ for both $
    \rc(\ARB_n) $ and $ \rc(\SPT_n) $. \enspace \qed
\end{proof}
\begin{remark}
    Since in the proof of Theorem \ref{thm:arb-spt} $ T_1 $ and $ T_2 $ are rooted at node $ v_1' $, the statements
    even hold if the sets $ \ARB_n $ and $ \SPT_n $ are restricted to characteristic vectors of arborescences/trees
    rooted at a fixed node.
\end{remark}

\noindent
Let $ \BRANCH_n $ ($ \FORESTS_n $) be the set of characteristic vectors of branchings (forests) in the complete directed
(undirected) graph.

\begin{corollary}
    The asymptotic growth of $ \rc(\BRANCH_n) $ and $ \rc(\FORESTS_n) $ is $ 2^{\Theta(n)} $.
\end{corollary}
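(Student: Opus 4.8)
The plan is to derive the upper bound $\rc(\FORESTS_n), \rc(\BRANCH_n) \le \O(2^n)$ from standard polyhedral descriptions, and to obtain the matching lower bound $2^{\Omega(n)}$ by reusing, essentially verbatim, the hiding set constructed in the proof of Theorem~\ref{thm:arb-spt}.

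For the upper bound I would invoke the fact that the \emph{forest polytope} $\conv(\FORESTS_n)$ --- the independence polytope of the graphic matroid of $K_n$ --- is described by $x \ge \zerovec$ together with the rank inequalities $x(E(S)) \le |S| - 1$ over node sets $S \subseteq V_n$ \cite{Schrijver03}, and is therefore a relaxation for $\FORESTS_n$ with only $\O(2^n)$ facets. The same applies to the \emph{branching polytope} $\conv(\BRANCH_n)$, which is cut out by $x \ge \zerovec$, the in-degree inequalities $x(\delta^{\mathrm{in}}(v)) \le 1$ for $v \in V_n$, and the inequalities $x(A(S)) \le |S| - 1$ for $S \subseteq V_n$; this is again a relaxation with $\O(2^n)$ facets. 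Hence $\rc(\FORESTS_n), \rc(\BRANCH_n) \le \O(2^n)$.

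For the lower bound I would reuse the modified set $H_N$ from the proof of Theorem~\ref{thm:arb-spt}, built from the arc sets $\mathcal{E}_b$ with the arc $(w_{N+1}, w_1)$ removed (and its edge version for the undirected case). For $b \in \{0,1\}^N$ with $\sum_{i=1}^N b_i$ even, the set $\mathcal{E}_b$ is a node-disjoint union of a cycle $C$ and a path, hence neither a forest nor a branching, and in fact it lies outside $\conv(\FORESTS_n)$ (resp.\ $\conv(\BRANCH_n)$) because it violates the rank inequality for the node set of $C$. The identity $\chi(\mathcal{E}_b) + \chi(\mathcal{E}_{b'}) = \chi(\mathcal{E}_c) + \chi(\mathcal{E}_{c'})$ with $\mathcal{E}_c, \mathcal{E}_{c'}$ spanning arborescences carries over unchanged, so $\conv\{\chi(\mathcal{E}_b), \chi(\mathcal{E}_{b'})\}$ meets $\conv(\ARB_n) \subseteq \conv(\BRANCH_n)$, and undirecting the arcs gives the analogue with $\conv(\SPT_n) \subseteq \conv(\FORESTS_n)$. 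Since both polytopes are full-dimensional ($\zerovec$ and the unit vectors belong to $\FORESTS_n$ and to $\BRANCH_n$, respectively), we have $\aff(\FORESTS_n) = \R^{E_n}$ and $\aff(\BRANCH_n) = \R^{A_n}$, so $H_N$ lies in the appropriate affine hull and is a hiding set. Proposition~\ref{prop:hiding-set} then yields $\rc(\FORESTS_n), \rc(\BRANCH_n) \ge |H_N| = 2^{\Omega(n)}$.

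I expect no genuine obstacle here: the only point deserving a moment's care is the claim that the hiding set points lie strictly outside the polytopes $\conv(\FORESTS_n)$ and $\conv(\BRANCH_n)$, not merely outside the integer sets $\FORESTS_n$ and $\BRANCH_n$, and this is immediate from the rank inequality violated by the cycle contained in each $\mathcal{E}_b$. Everything else is inherited from the argument already carried out for $\ARB_n$ and $\SPT_n$.
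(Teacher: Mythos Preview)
Your proposal is correct, but it takes a different route from the paper. The paper's argument is a one-line reduction: since
\[
\ARB_n = \BRANCH_n \cap \Big\{x : \sum_a x_a = n-1\Big\}, \qquad
\SPT_n = \FORESTS_n \cap \Big\{x : \sum_e x_e = n-1\Big\},
\]
any relaxation $R$ for $\BRANCH_n$ (resp.\ $\FORESTS_n$) intersected with that hyperplane is a relaxation for $\ARB_n$ (resp.\ $\SPT_n$) with no more facets; hence $\rc(\BRANCH_n)\ge\rc(\ARB_n)=2^{\Omega(n)}$ and likewise for forests. Your approach instead reuses the modified hiding set $H_N$ directly and checks it is a hiding set for $\BRANCH_n$ and $\FORESTS_n$. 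This is also valid; your extra care about the points lying outside $\conv(\BRANCH_n)$ rather than just outside $\BRANCH_n$ is in fact unnecessary, since for any $X\subseteq\{0,1\}^d$ one has $\conv(X)\cap\Z^d=X$, so a $0/1$ point not in $X$ is automatically outside $\conv(X)$. The paper's reduction is slicker and avoids re-verifying anything about $H_N$, while your argument is more self-contained and makes the upper bound explicit via the polyhedral descriptions.
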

\begin{proof}
    The claim follows from Theorem \ref{thm:arb-spt} and the facts that
    \begin{align*}
        \ARB_n &= \BRANCH_n \cap \Big\{ x \in \R^{A_n} : \sum_{a \in \R^{A_n}} x_a = n-1 \Big\} \\
        \SPT_n &= \FORESTS_n \cap \Big\{ x \in \R^{E_n} : \sum_{e \in \R^{E_n}} x_e = n-1 \Big\}. \enspace \qed
    \end{align*}
\end{proof}

\subsection{Distinctness}

\noindent
Another common component of practical IP-formulations is the requirement of distinctness of a certain set of vectors or
variables. Here, we consider two general cases in which we can also show that the benefit of auxiliary variables is
essential.

\makeatletter{}\subsubsection{Binary All-Different}

\noindent
In the case of the \emph{binary all-different} constraint, one requires the distinctness of rows of a binary matrix with
$ m $ rows and $ n $ columns. The set of feasible points is therefore defined by

\[
    \DIFF_{m,n} := \setdef{ x \in \{0,1\}^{m \times n} }[ x \text{ has pairwise distinct rows}].
\]

\noindent
As an example, \cite{LeeM07} give IP-formulations to solve the coloring problem in which they binary
encode the color classes assigned to each node. As a consequence, certain sets of encoding vectors have to be distinct.

By separating each possible pair of equal rows by one inequality, it is further easy to give a relaxation for $ \DIFF_{m,n} $
that has at most $ \binom{m}{2} 2^n + 2mn $ facets. In the case of $ m = 2 $, for instance, this bound turns out to be
almost tight:

\begin{theorem}
    For all $ n \geq 1 $, we have that $ \rc(\DIFF_{2,n}) \geq 2^n $.
\end{theorem}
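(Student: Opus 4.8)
The plan is to apply the hiding set bound of Proposition~\ref{prop:hiding-set} with a hiding set of size exactly $2^n$. It is convenient to regard a point of $\{0,1\}^{2\times n}$ as a pair $(u,v)$ of its rows $u,v\in\{0,1\}^n$, so that $\DIFF_{2,n}=\setdef{(u,v)}[u\neq v]$. The natural candidate for a hiding set is the ``forbidden diagonal'' itself,
\[
    H:=\setdef{(u,u)}[u\in\{0,1\}^n],
\]
which has cardinality $2^n$.

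First I would verify that $H$ is disjoint from $\conv(\DIFF_{2,n})$ (hence in particular from $\DIFF_{2,n}$). If $(u,u)$ were a convex combination $\sum_k\lambda_k(u^{(k)},v^{(k)})$ with $\lambda_k>0$ and each $u^{(k)}\neq v^{(k)}$, then comparing entries coordinatewise and using that all coordinates lie in $\{0,1\}$ forces $u^{(k)}=u$ and $v^{(k)}=u$ for every $k$, so that $(u^{(k)},v^{(k)})$ has equal rows, contradicting $(u^{(k)},v^{(k)})\in\DIFF_{2,n}$. Next I would check the affine‑hull requirement $H\subseteq\aff(\DIFF_{2,n})$. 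For $n\geq2$ the set $\DIFF_{2,n}$ is full‑dimensional: a hyperplane $\langle(a',a''),(u,v)\rangle=\beta$ containing $\DIFF_{2,n}$ would, upon fixing $u=\zerovec$ and letting $v$ range over $\unitvec_1,\unitvec_2,\unitvec_1+\unitvec_2$ (all nonzero, hence distinct from $\zerovec$), force $a''=\zerovec$ and $\beta=0$, and by the symmetric argument with the two rows exchanged $a'=\zerovec$; thus $\aff(\DIFF_{2,n})=\R^{2n}\supseteq H$. The case $n=1$ is handled directly: $\conv(\DIFF_{2,1})$ is a segment, and the two integer points $(-1,2),(2,-1)$ already form a hiding set of size $2=2^1$ for it, since they lie on $\aff(\DIFF_{2,1})$ and their connecting segment meets $\conv(\DIFF_{2,1})$.

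It then remains to check the defining property of a hiding set: for distinct $(u,u),(w,w)\in H$, i.e.\ $u\neq w$, the two matrices $(u,w)$ and $(w,u)$ both have distinct rows and therefore lie in $\DIFF_{2,n}$, while
\[
    \tfrac12(u,u)+\tfrac12(w,w)=\tfrac12(u,w)+\tfrac12(w,u).
\]
Hence this common midpoint belongs to $\conv\{(u,u),(w,w)\}\cap\conv(\DIFF_{2,n})$, which is therefore nonempty. So $H$ is a hiding set for $\DIFF_{2,n}$, and Proposition~\ref{prop:hiding-set} gives $\rc(\DIFF_{2,n})\geq|H|=2^n$.

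I do not expect a genuine obstacle here; the argument is short once one realises that the set one wishes to exclude is itself the hiding set, with the midpoint of two diagonal matrices recovered simply by swapping the two rows. The only places demanding a little care are the verification that the diagonal avoids $\conv(\DIFF_{2,n})$ and the full‑dimensionality check needed to place $H$ inside $\aff(\DIFF_{2,n})$, together with the separate (but immediate) treatment of $n=1$.
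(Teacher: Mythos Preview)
Your argument is correct and follows the same approach as the paper: the diagonal $H=\{(u,u):u\in\{0,1\}^n\}$ is exactly the paper's hiding set, and your midpoint identity $\tfrac12(u,u)+\tfrac12(w,w)=\tfrac12(u,w)+\tfrac12(w,u)$ is the paper's key step. Your extra care is in fact warranted: the paper simply asserts $\aff(\DIFF_{2,n})=\R^{2\times n}$, which fails for $n=1$, so your separate treatment of that case (and your explicit check that the diagonal points lie outside $\conv(\DIFF_{2,n})$) tidies up points the paper glosses over.
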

\begin{proof}
    Let us consider the set
    \[
        H_{2,n} := \setdef{ (x,x)^T \in \{0,1\}^{2 \times n}}[x \in \{0,1\}^n].
    \]
    For $ x, y \in \{0,1\}^n $ distinct, we obviously have that
    \[
        \frac{1}{2} \left( (x,x)^T + (y,y)^T \right) = \frac{1}{2} \left( (x,y)^T + (y,x)^T \right) \in \conv(\DIFF_{2,n}).
    \]
    Since $ H_{2,n} \cap \DIFF_{2,n} = \emptyset $ and $ H_{2,n} \subseteq \aff(\DIFF_{2,n}) = \R^{2 \times n} $, $
    H_{2,n} $ is a hiding set for $ \DIFF_{2,n} $ and by Proposition \ref{prop:hiding-set} we obtain that $
    \rc(\DIFF_{2,n}) \geq |H_{2,n}| = 2^n $. \enspace \qed
\end{proof}
 
\makeatletter{}\subsubsection{Permutahedron}

\noindent
As a case in which one does not require the distinctness of binary vectors but of a set of numbers let us consider the
set
\[
    \PERM_n := \setdef{ (\pi(1),\dotsc,\pi(n)) \in \Z^n}[\pi \in \Sn],
\]
which is the vertex set of the \emph{permutahedron} $ \conv(\PERM_n) $. Rado \cite{Rado52} showed that the permutahedron
can be described via
\begin{align}
    \nonumber
    \conv(\PERM_n) = \bigg\{ x \in \R^n : \sum_{i=1}^n x_i &= \frac{n(n+1)}{2} \\
    \nonumber
    \sum_{i \in S} x_i & \geq \frac{|S| (|S|+1)}{2} \ \text{for all } \emptyset \neq S \subset [n] \\
    \label{eq:perm-ineq}
    x &\geq \zerovec \bigg\}
\end{align}
\noindent
and hence has $ \O(2^n) $ facets. Apart from that, it is a good example for a polytope having many different, very
compact extended formulations, see, e.g., \cite{Goemans14}. In the contrary, we show that the relaxation complexity of $
\PERM_n $ has exponential growth in $ n $:

\begin{theorem}
    The asymptotic growth of $ \rc(\PERM_n) $ is $ 2^{\Theta(n)} $.
\end{theorem}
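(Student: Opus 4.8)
The plan is as follows. For the upper bound, note that Rado's description~\eqref{eq:perm-ineq} realizes $\conv(\PERM_n)$ as a polyhedron with $\O(2^n)$ facets, and $\conv(\PERM_n)$ is trivially a relaxation for $\PERM_n$; hence $\rc(\PERM_n) \le \O(2^n)$. All the work lies in the matching exponential lower bound, which I would obtain by exhibiting a hiding set for $\PERM_n$ of size $2^{\Omega(n)}$ and invoking Proposition~\ref{prop:hiding-set}.

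To build the hiding set, write $m := \lfloor n/3 \rfloor$ and partition the first $3m$ coordinates---and, in parallel, the first $3m$ values---into consecutive blocks of size three, block $i$ being $\{3i-2,3i-1,3i\}$; put $v_i := 3i-2$. (Any $r \le 2$ leftover coordinates will always carry the fixed values $3m+1,\dotsc,n$ and play no active role.) For $b \in \{0,1\}^m$ define $h_b \in \Z^n$ block by block: on block $i$ it equals the ``spread'' vector $(v_i,\,v_i+1,\,v_i+2)$ if $b_i = 0$, and the ``concentrated'' vector $(v_i+1,\,v_i+1,\,v_i+1)$ if $b_i = 1$. The block-sum equals $3v_i+3$ in either state, so $\langle \onevec, h_b\rangle = \frac{n(n+1)}{2}$ and $h_b \in \aff(\PERM_n)$; moreover $h_b$ is a permutation of $[n]$ precisely when $b = \zerovec$, since a concentrated block creates a repeated entry. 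Thus $H := \setdef{h_b}[b \in \{0,1\}^m \setminus \{\zerovec\}]$ consists of $2^m - 1 = 2^{\Omega(n)}$ pairwise distinct points in $\aff(\PERM_n) \cap \Z^n \setminus \PERM_n$.

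It remains to verify the hiding property. Following the technique used repeatedly in this section, for distinct $b,b' \in \{0,1\}^m \setminus \{\zerovec\}$ I would produce permutations $c, c' \in \PERM_n$ with $h_b + h_{b'} = c + c'$; then $\frac12(h_b + h_{b'}) = \frac12(c+c') \in \conv(\PERM_n) \cap \conv\{h_b,h_{b'}\}$, as required. Again $c$ and $c'$ are defined block by block, and on each block they are required to be permutations of that block's value set $\{v_i, v_i+1, v_i+2\}$ (on the leftover coordinates both are the identity, matching the $2\times(\text{identity})$ contributed there by $h_b + h_{b'}$). On block $i$ the pair $(h_b, h_{b'})$ is, up to order, one of (spread,\,spread), (concentrated,\,concentrated), or (spread,\,concentrated), and in each case a short check supplies the two block-permutations---for instance in the (concentrated,\,concentrated) case the block-sum is $(2v_i+2,2v_i+2,2v_i+2) = (v_i, v_i+2, v_i+1) + (v_i+2, v_i, v_i+1)$, and the other two cases are analogous (e.g.\ $(2v_i+1,2v_i+2,2v_i+3) = (v_i, v_i+2, v_i+1) + (v_i+1, v_i, v_i+2)$ in the mixed case). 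Since the block value sets together with the leftover values partition $[n]$, the resulting $c$ and $c'$ are genuine permutations. Hence $H$ is a hiding set and $\rc(\PERM_n) \ge |H| = 2^{\Omega(n)}$, which combined with the upper bound gives $\rc(\PERM_n) = 2^{\Theta(n)}$.

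The point that requires the actual idea---already settled above---is the design of the block gadget: a non-trivial block state must depart from the ``spread'' permutation pattern (forcing $h_b \notin \PERM_n$) while preserving the block-sum (keeping $h_b \in \aff(\PERM_n)$), and, crucially, the sum of any two admissible block states must decompose into a sum of two permutations of the block's values. Blocks of size two are impossible (the relevant common sum is odd), which is exactly why size-three blocks are used; with the gadget in hand, the rest is bookkeeping.
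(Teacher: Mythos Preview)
Your construction has a fatal flaw: the points $h_b$ with $b \ne \zerovec$ all lie \emph{inside} $\conv(\PERM_n)$, not outside it. A hiding set must be contained in $\aff(X)\cap\Z^d\setminus\conv(X)$; integer points of $\conv(\PERM_n)$ that are not permutations (and there are many) must be contained in every relaxation and are useless for lower bounds. Your ``concentrated'' block $(v_i+1,v_i+1,v_i+1)$ is precisely the barycenter of the permutations of $\{v_i,v_i+1,v_i+2\}$; already
\[
\tfrac12(v_i,\,v_i+1,\,v_i+2)+\tfrac12(v_i+2,\,v_i+1,\,v_i)=(v_i+1,\,v_i+1,\,v_i+1).
\]
Since on every block the restriction of $h_b$ is a convex combination of block-permutations and the block value sets (together with the leftover values) partition $[n]$, taking the product of these convex combinations exhibits each $h_b$ as a convex combination of global permutations, i.e.\ $h_b\in\conv(\PERM_n)$. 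Concretely, for $n=3$ your only nontrivial point is $(2,2,2)$, the center of the hexagon $\conv(\PERM_3)$. So $H\cap\conv(\PERM_n)=H$, and $H$ is not a hiding set.

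The paper avoids this by designing each hiding point so that it \emph{violates} one specific facet inequality of Rado's description~\eqref{eq:perm-ineq}: for each $S\subseteq[n]$ with $|S|=\lfloor n/2\rfloor$ it builds $x^S$ with $\sum_{i\in S}x^S_i<\frac{|S|(|S|+1)}{2}$ (and all other inequalities satisfied), which certifies $x^S\notin\conv(\PERM_n)$ directly. The hiding property then follows from a short quantitative estimate showing the midpoint of $x^{S_1}$ and $x^{S_2}$ satisfies all of~\eqref{eq:perm-ineq}. A block-gadget repair of your approach would need a block state that genuinely pushes the point outside the permutahedron---for instance by making some partial sum too small---but any such state must borrow values from outside the block's own range, destroying the clean block independence you rely on; one is essentially led back to the paper's global construction.
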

\begin{proof}
    Let $ m := \lfloor \frac{n}{2} \rfloor $. For any set $ S \subseteq [n] $ with $ |S| = m $ select an integer vector
    $ x^S \in \Z^n $
    with $ \{ x^S_i : i \in S\} = \setdef{1,\dotsc,m-1} $ and $m-1$ 
    occurring twice among the $x^S_i$ ($i\in S$) and $ \{ x^S_i : i \in [n] \setminus S\} =
    \setdef{m+2,\dotsc,n} $ and $m+2$ occurring twice among the $x^S_i$ ($i\in  [n] \setminus S$).
    Such a vector is not contained in $ \conv(\PERM_n) $ as
    \[
        \sum_{i \in S} x_i^S = 1 + 2 + \dotsc + (|S| - 1) + (|S| - 1) < \frac{|S|(|S|+1)}{2}
    \]
    On the other hand, note that this is the only constraint from \eqref{eq:perm-ineq} that is violated by $ x^S $. In
    particular, $ x^S \in \aff(\PERM_n) $ holds.

    Let $ S_1, S_2 \subseteq [n] $ with $ |S_1| = |S_2| = m $ be distinct. We will show that $ x := \frac{1}{2} \cdot
    (x^{S_1} + x^{S_2} ) \in \conv(\PERM_n) $ holds. Since $ x $ satisfies all constraints that are satisfied by both
    $ x^{S_1} $ and $ x^{S_2} $, it suffices to show that $ \sum_{i \in T} x_i \geq \frac{|T|(|T|+1)}{2} $ holds for $ T
    \in \{ S_1, S_2 \} $. W.l.o.g. we may assume that $ T = S_1 $ and obtain
    \begin{align*}
        \sum_{i \in S_1} x_i & = \frac{1}{2} \sum_{i \in S_1} x_i^{S_1} + \frac{1}{2} \sum_{i \in S_1} x_i^{S_2} \\
        & = \frac{1}{2} \left( \frac{m(m+1)}{2} - 1 \right) + \frac{1}{2} \sum_{i \in S_1} x_i^{S_2} \\
        & \geq \frac{1}{2} \left( \frac{m(m+1)}{2} - 1 \right) + \frac{1}{2} \left( \frac{m(m+1)}{2} + 2 \right) \\
        & = \frac{m(m+1)}{2} + \frac{1}{2} \geq \frac{|T|(|T|+1)}{2}. \enspace \qed
    \end{align*}
    Thus, the set $ H := \setdef{x^S}[S \subseteq [n], \, |S| = m] $ is a hiding set for $ \PERM_n $. Our claim follows
    from Proposition \ref{prop:hiding-set} and the fact that $ |H| = \binom{n}{\lfloor \frac{n}{2} \rfloor} =
    2^{\Theta(n)} $. \qed
\end{proof}

\subsection{Parity}
\label{sec:parity}

\noindent
The final structural class we consider deals with the restriction that the number of selected elements of a given set
has a certain parity. Let us call a binary vector $ a \in \{0,1\}^d $ \emph{even} (\emph{odd}) if the sum of its entries
is even (odd). In \cite{Jeroslow75} it is shown that the number of inequalities needed to separate
\[
    \EVEN_n := \setdef{ x \in \{0,1\}^n }[ x \text{ is even}]
\]
from all other points in $ \{0,1\}^n $ is exactly $ 2^{n-1} $. This is done by showing that
\[
    \ODD_n := \setdef{ x \in \{0,1\}^n }[ x \text{ is odd}]
\]
is a hiding set for $ \EVEN_n $ (although the notion is different from ours). Hence, with Corollary \ref{cor:binary}, we
obtain:
\begin{theorem}
    \label{thm:parity}
    The asymptotic growth of $ \rc(\EVEN_n) $ is $ \Theta(2^n) $. \qed
\end{theorem}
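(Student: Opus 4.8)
The plan is to sandwich $\rc(\EVEN_n)$ between $2^{n-1}$ and $2^{n-1}+n+1$. The upper bound $\rc(\EVEN_n) \le 2^{n-1}+n+1 = \O(2^n)$ is nothing but Corollary~\ref{cor:binary} applied with $d=n$, so the entire content lies in the matching lower bound $\rc(\EVEN_n) \ge 2^{n-1}$. For this I would invoke the hiding-set machinery: I claim that $\ODD_n$ is a hiding set for $\EVEN_n$ in the sense of the Definition above, and then Proposition~\ref{prop:hiding-set} yields $\rc(\EVEN_n) \ge |\ODD_n| = 2^{n-1} = 2^{\Theta(n)}$, which together with the upper bound gives the asserted $\Theta(2^n)$ growth. (It suffices to handle $n \ge 3$; the finitely many smaller cases do not affect the asymptotics.)

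To check that $\ODD_n$ is a hiding set I would verify the three requirements in turn. First, $\ODD_n \subseteq \aff(\EVEN_n) \cap \Z^n$: for $n \ge 3$ the set $\EVEN_n$ is full-dimensional, since $\zerovec$ and all vectors $\unitvec_i+\unitvec_j$ ($1 \le i < j \le n$) lie in $\EVEN_n$ and the identity $2\unitvec_1 = (\unitvec_1+\unitvec_2)+(\unitvec_1+\unitvec_3)-(\unitvec_2+\unitvec_3)$ (together with its symmetric analogues) shows that every $\unitvec_i$ lies in the linear hull of $\EVEN_n - \zerovec$; hence $\aff(\EVEN_n) = \R^n \supseteq \ODD_n$. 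Second, $\ODD_n \cap \conv(\EVEN_n) = \emptyset$: every element of $\EVEN_n$ is a vertex of $[0,1]^n$, hence a vertex of the subpolytope $\conv(\EVEN_n) \subseteq [0,1]^n$, and conversely any $w \in \{0,1\}^n \cap \conv(\EVEN_n)$, being an extreme point of $[0,1]^n$, is an extreme point of $\conv(\EVEN_n)$ and therefore lies in $\EVEN_n$; thus $\conv(\EVEN_n) \cap \{0,1\}^n = \EVEN_n$, which is disjoint from $\ODD_n$.

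Third — the only step with any content — for distinct $a, b \in \ODD_n$ I need a point of $\conv\{a,b\}$ inside $\conv(\EVEN_n)$, and the natural candidate is the midpoint $\tfrac12(a+b)$. Choose a coordinate $j$ with $a_j \neq b_j$, let $c$ arise from $a$ by overwriting its $j$-th entry with $b_j$, and let $d$ arise from $b$ by overwriting its $j$-th entry with $a_j$. Then $c,d \in \{0,1\}^n$, exactly one $0/1$ entry has been flipped in each, so both $c$ and $d$ are even, and $c+d = a+b$ coordinatewise; hence $\tfrac12(a+b) = \tfrac12(c+d) \in \conv\{c,d\} \subseteq \conv(\EVEN_n)$, so $\conv\{a,b\} \cap \conv(\EVEN_n) \neq \emptyset$. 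This completes the verification. I do not expect a genuine obstacle here: the coordinate-swap trick is precisely the binary special case of the identity used in Lemma~\ref{lem:hiding-set-atsp}, and the only points requiring care are the routine dimension count giving $\ODD_n \subseteq \aff(\EVEN_n)$ and setting aside the cases $n \le 2$ where $\EVEN_n$ fails to be full-dimensional; since $2^{n-1} \le \rc(\EVEN_n) \le 2^{n-1}+n+1$ for $n \ge 3$, the asymptotic growth is $\Theta(2^n)$.
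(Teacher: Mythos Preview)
Your proposal is correct and follows exactly the paper's approach: the upper bound via Corollary~\ref{cor:binary} and the lower bound via Proposition~\ref{prop:hiding-set} applied to the hiding set $\ODD_n$. The paper merely cites Jeroslow for the fact that $\ODD_n$ is a hiding set for $\EVEN_n$, whereas you spell out the verification (full-dimensionality for $n\ge 3$, disjointness from $\conv(\EVEN_n)$, and the coordinate-flip midpoint argument), but the strategy is identical.
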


\makeatletter{}\subsubsection{$ T $-joins}

\noindent
As a well-known representative of this structural class let us consider $ \TJOINS_n $, which is, for given $ T \subseteq
V_n $, defined as the set of characteristic vectors of $ T $-joins in the complete graph on $ n $ nodes. Let us recall
that a $ T $-join is a set $ J \subseteq E_n $ of edges such that $ T $ is equal to the set of nodes of odd degree in
the graph $ (V_n, J) $. Note, that if a $ T $-join exists, then $ |T| $ is even.

\begin{theorem}
    Let $ n $ be even and $ T \subseteq V_n $ with $ |T| $ even. Then, $ \rc(\TJOINS_n) \geq 2^{\frac{n}{4}-1} $.
\end{theorem}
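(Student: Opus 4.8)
\noindent
The plan is to exhibit a large hiding set for $\TJOINS_n$ and then apply Proposition~\ref{prop:hiding-set}; the construction is a $T$-join incarnation of the parity trick that makes $\ODD_n$ a hiding set for $\EVEN_n$. The idea is to build $0/1$-points that are $T$-joins ``up to the degree parity of a single pair of nodes'', indexed by binary strings, so that flipping one bit repairs the defect.

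First I would fix two nodes $s,t$ (chosen in $T$ if $|T|\ge 2$, arbitrary if $T=\emptyset$) and let $a_1,\dots,a_N$ be the remaining $N=n-2$ nodes. For $i\in[N]$ let $P_i=\{\{s,a_i\},\{a_i,t\}\}$ be the length-$2$ path through $a_i$, and fix a $T$-join $J_0$ whose only edge meeting $\{s,t\}$ is $\{s,t\}$ itself; e.g.\ $J_0=\{\{s,t\}\}\cup J_0''$ with $J_0''$ a perfect matching of $T\setminus\{s,t\}$ in the complete graph on $V_n\setminus\{s,t\}$ (which exists since $|T\setminus\{s,t\}|$ is even), or $J_0=\emptyset$ if $T=\emptyset$. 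In particular $J_0$ and all the $P_i$ are pairwise edge-disjoint. For $b\in\{0,1\}^N$ put $F_b:=J_0\cup\bigcup_{i:\,b_i=1}P_i$ and $h_b:=\chi(F_b)$. A short degree count (using edge-disjointness) shows that, modulo $2$, $\deg_{F_b}(a_i)\equiv\mathbbm 1[a_i\in T]$ for all $i$ while $\deg_{F_b}(s)\equiv\deg_{F_b}(t)\equiv\mathbbm 1[s\in T]+\sum_i b_i$; hence $F_b$ is a $T$-join exactly when $\sum_i b_i$ is even, and is a $(T\triangle\{s,t\})$-join (in particular not a $T$-join) when $\sum_i b_i$ is odd. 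I would then take $H:=\{\,h_b : b\in\{0,1\}^N,\ \sum_i b_i\text{ odd}\,\}$, so that $|H|=2^{N-1}=2^{n-3}$. Each $h_b\in H$ is a $0/1$-point that is not a $T$-join, hence lies outside $\conv(\TJOINS_n)$; and $h_b\in\aff(\TJOINS_n)$ once one knows that $\conv(\TJOINS_n)$ is full-dimensional, which holds for $n\ge 5$ (and the remaining even $n\le 4$ make the claimed bound trivial).

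The heart of the proof is verifying the hiding condition: for distinct $b,b'$ with $\sum b_i,\sum b'_i$ odd one must show $\tfrac12(h_b+h_{b'})\in\conv(\TJOINS_n)$. Write $A=\bigcup_{b_i=b'_i=1}P_i$ and $D=\bigcup_{b_i\neq b'_i}P_i$, let $k=|\{i:b_i=b'_i=1\}|$, and let $m$ be the number of paths in $D$; since $m$ equals the Hamming distance of $b,b'$, it is even and $\ge 2$. By edge-disjointness, $\tfrac12(h_b+h_{b'})=\chi(J_0\cup A)+\tfrac12\chi(D)$. The key observation is that a union of an even number of our (internally vertex-disjoint) $s$-$t$ paths is an even subgraph, so adjoining such a family to a $T$-join (or deleting it) preserves the property. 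Therefore, if $k$ is even then $J_0\cup A$ and $J_0\cup A\cup D$ are $T$-joins and $\tfrac12\chi(J_0\cup A)+\tfrac12\chi(J_0\cup A\cup D)=\tfrac12(h_b+h_{b'})$; if $k$ is odd, pick any single path $P_j\subseteq D$ — then $J_0\cup A\cup P_j$ and $J_0\cup A\cup(D\setminus P_j)$ are $T$-joins (now $k+1$ and $k+m-1$ are even), and $\tfrac12\chi(J_0\cup A\cup P_j)+\tfrac12\chi(J_0\cup A\cup(D\setminus P_j))=\tfrac12(h_b+h_{b'})$. In either case $\tfrac12(h_b+h_{b'})$ is a convex combination of two $T$-joins.

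Thus $H$ is a hiding set, and Proposition~\ref{prop:hiding-set} yields $\rc(\TJOINS_n)\ge|H|=2^{n-3}\ge 2^{n/4-1}$ (so in fact a much stronger bound than stated is obtained; if one insists on the modest form above one may use far fewer paths). I expect the hiding verification to be the only delicate part — in particular the parity bookkeeping at $s$ and $t$ and the case split on the parity of $k$; the degree computation identifying the $T$-joins among the $F_b$, and the full-dimensionality of the $T$-join polytope, are both routine.
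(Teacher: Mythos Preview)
Your argument is correct and in fact yields a considerably stronger bound than the one stated. The paper takes a different route: it partitions $V_n$ into four blocks $T_1,T_2,U_1,U_2$ with $T=T_1\cup T_2$, $|T_1|=|T_2|=k$, $|U_1|=|U_2|=\ell$, and uses families of $k$ (resp.\ $\ell$) pairwise edge-disjoint perfect matchings between $T_1$ and $T_2$ (resp.\ $U_1$ and $U_2$). A subset of matchings $J(b,c)$ is a $T$-join iff $b$ is odd and $c$ is even; fixing one vector and letting the other range over the ``wrong'' parity yields two hiding sets of sizes $2^{k-1}$ and $2^{\ell-1}$, whose maximum is at least $2^{n/4-1}$. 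Your construction instead funnels everything through two distinguished vertices $s,t$ via the length-two paths $P_i$, obtaining a single hiding set of size $2^{n-3}$ that works uniformly for every admissible $T$. The parity bookkeeping and the case split on $k$ are handled correctly (note $m$ is even and $\ge 2$, so the case $k$ odd always has a path to borrow), and the appeal to full-dimensionality of the $T$-join polytope in $K_n$ for $n\ge 4$ is standard. The upshot: your approach is more uniform in $T$ and improves the exponent from $n/4-1$ to $n-3$; the paper's matching construction, on the other hand, avoids the case distinction in the hiding verification, since flipping a single coordinate of $b$ (or $c$) already repairs the defect.
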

\begin{proof}
    Since $ n $ is even and $ |T| $ is even, we may partition $ V_n $ into pairwise disjoint sets $ T_1, \, T_2, \, U_1,
    \, U_2 $ with $ T = T_1 \cup T_2 $, $ k = |T_1| = |T_2| $ and $ \ell = |U_1| = |U_2| $. Let $ M_1, \dotsc, M_k $ be
    pairwise edge-disjoint matchings of cardinality $ k $ that connect nodes from $ T_1 $ with nodes from $ T_2 $.
    Analogously, let $ N_1, \dotsc, N_\ell $ be pairwise edge-disjoint matchings of cardinality $ \ell $ that connect
    nodes from $ U_1 $ with nodes from $ U_2 $. For $ b \in \{0,1\}^k $ and $ c \in \{0,1\}^\ell $ let
    \[
        J(b,c) := \bigg( \bigcup_{i : b_i = 1} M_i \bigg) \cup \bigg( \bigcup_{j : c_j = 1} N_j \bigg) \subseteq E_n.
    \]
    By definition, $ J(b,c) $ is a $ T $-join if and only if $ b $ is odd and $ c $ is even. Let $ b^* \in \{0,1\}^k $
    odd and $ c^* \in \{0,1\}^\ell $ even be arbitrarily chosen but fixed. Since $ \ODD_n $ is a hiding set for $
    \EVEN_n $ and vice versa, it is now easy to see that both sets
    \begin{align*}
        H_1 & := \setdef{ J(b,c^*) }[b \in \{0,1\}^k \text{ even}] \\
        H_2 & := \setdef{ J(b^*,c) }[c \in \{0,1\}^\ell \text{ odd}],
    \end{align*}
    are hiding sets for $ \TJOINS_n $. Our claim follows from Proposition \ref{prop:hiding-set} and the fact that
    \begin{align*}
        \max \setdef{|H_1|, |H_2|} = \max \setdef{2^{k-1}, 2^{\ell-1}} & = \max \setdef{2^{\frac{1}{2}|T|-1},
        2^{\frac{1}{2}(n-|T|)-1}} \\
        & \geq 2^{\frac{1}{2} \cdot \frac{n}{2} - 1}. \enspace \qed
    \end{align*}
\end{proof}

\makeatletter{}\section{The Role of Rationality}
\label{sec:rationality}

\noindent
An interesting question is whether it may help (in order to construct a
relaxation having few facets) to use irrational coordinates in the description of a relaxation. First, let us show that
one does not lose too much when restricting to rational relaxations only:

\begin{proposition}
    \label{thm:rc-irrational}
    Let $ X \subseteq \Z^d $ be finite and $ \rc_{\Q}(X) $ be the smallest number of facets of any rational relaxation
    for $ X $. Then, $ \rc_{\Q}(X) \leq \rc(X) + \dim(X) + 1 $.
\end{proposition}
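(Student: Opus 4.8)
The plan is: take a minimum relaxation $R$ for $X$, perturb its facet‑defining inequalities to rational ones that are still valid on $\conv(X)$, and then intersect the result with a rational polyhedron that has only $\dim(X)+1$ facets and which ``compactifies'' the picture enough that no spurious integer points survive.

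Setup and the perturbation step. Since $X$ is finite, $\conv(X)$ is a rational polytope and $Y:=\conv(X)\cap\Z^d$ is finite; moreover $\aff(X)$ is a rational affine subspace of dimension $m:=\dim(X)$. Write $R=\{x:\langle a_i,x\rangle\le\beta_i,\ i\in[k]\}$ with $k=\rc(X)$, so $\conv(X)\subseteq R$ and $R\cap\Z^d=Y$. For a fixed finite set $Z_i\subseteq\Z^d$ with $\langle a_i,z\rangle>\beta_i$ for all $z\in Z_i$, the set of pairs $(a,\beta)$ that are valid on $\conv(X)$ and strictly violated by every $z\in Z_i$ is a nonempty, relatively open subset of the rational polyhedral cone $\{(a,\beta):\langle a,v\rangle\le\beta\ \forall\,v\in\mathrm{vert}(\conv(X))\}$, and hence contains a rational point $(a_i',\beta_i')$ arbitrarily close to $(a_i,\beta_i)$. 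Doing this for every $i$ yields a rational polyhedron $R^\ast:=\{x:\langle a_i',x\rangle\le\beta_i',\ i\in[k]\}\supseteq\conv(X)$ with at most $k$ facets, which cuts off every point of $\bigcup_iZ_i$, and with $\mathrm{rec}(R^\ast)$ as close to $\mathrm{rec}(R)$ as we wish.

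The capping polyhedron. Intersecting $R^\ast$ with a rational $d$‑simplex enclosing $\conv(X)$ already gives $\rc_\Q(X)\le\rc(X)+d+1$; to reach $\dim(X)+1$ one uses an \emph{unbounded} $C$. Let $S$ be a rational $m$‑simplex inside $\aff(X)$ with $\conv(X)\subseteq S$, and let $C$ be the preimage of $S$ under a rational affine projection of $\R^d$ onto $\aff(X)$, so that $C$ has exactly $m+1$ facets and $\mathrm{rec}(C)$ is the kernel of that projection, a complement of $L:=\aff(X)-\aff(X)$. The key structural fact — which I expect to be the main obstacle — is that a relaxation of a finite set has a ``small'' recession cone: $\mathrm{rec}(R)$ meets $L$ only in $\mathbf 0$ and contains no rational ray, since otherwise $R\supseteq\conv(X)+\mathrm{rec}(R)$ would contain an $\varepsilon$‑neighbourhood of a positive‑dimensional affine subspace and hence infinitely many lattice points, contradicting $R\cap\Z^d=Y$ finite. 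This lets us pick the projection's kernel to be a rational complement of $L$ avoiding $\mathrm{rec}(R)$ (one extra trimming half‑space may be needed to absorb residual pointed directions), so that $\mathrm{rec}(R^\ast)\cap\mathrm{rec}(C)=\{\mathbf 0\}$ for a small enough perturbation and $R^\ast\cap C$ is bounded.

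Finishing. Fix $C$ first; then $R\cap C$ is bounded, so we may choose a bounded rational enlargement $R\subseteq R^+$ with $R^+\cap C$ still bounded, put $Z:=(R^+\cap C\cap\Z^d)\setminus Y$ (finite), and observe that each $z\in Z$ lies outside $R$, hence is cut off by some facet $i(z)$ of $R$. Run the perturbation with $Z_i:=\{z\in Z:i(z)=i\}$ as control sets and the perturbation small enough that $R^\ast\cap C\subseteq R^+\cap C$. Then every integer point of $R^\ast\cap C$ lies in $C\cap\Z^d\subseteq Y\cup Z$; those in $Z$ are cut off by $R^\ast$, while $Y\subseteq\conv(X)\subseteq R^\ast\cap C$, so $(R^\ast\cap C)\cap\Z^d=Y=\conv(X)\cap\Z^d$ and $R':=R^\ast\cap C$ is a rational relaxation for $X$ with at most $k+(m+1)=\rc(X)+\dim(X)+1$ facets. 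All of the bookkeeping after $C$ is fixed is routine; the genuinely delicate point is the recession‑cone control that guarantees $\dim(X)+1$ additional inequalities already compactify $R^\ast$.
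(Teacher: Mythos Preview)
Your approach has a genuine gap at the point you yourself flag as delicate: the recession-cone control. The ``key structural fact'' that $\mathrm{rec}(R)\cap L=\{0\}$ is \emph{false} in general, and the justification you give for it is also false. An $\varepsilon$-neighbourhood of a line in $\R^d$ need not contain infinitely many lattice points; the Minkowski-type statement in the paper (Corollary after Minkowski's theorem) only applies to tubes around a line through the \emph{origin} (equivalently, through a lattice point). For a concrete counterexample, take the very example that appears later in the same section: with $X$ the six vertices of the $5$-simplex $S\subset\R^5$ given there and $c=(0,0,0,1,\sqrt{2})^T$, the polyhedron $R:=S+\R c$ is a relaxation of $X$ (the paper checks $R\cap\Z^5=X$), yet $\dim(X)=5$, so $L=\R^5$ and $\mathrm{rec}(R)\supseteq\R c\neq\{0\}$. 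For a generic interior point $p$ of $S$ the tube $B_\varepsilon(p)+\R c$ contains no lattice point at all (the first three coordinates are pinned near the non-integers $p_1,p_2,p_3$). So the chain ``nontrivial recession in $L$ $\Rightarrow$ $\varepsilon$-tube around a line $\Rightarrow$ infinitely many lattice points'' breaks, and with it your argument that a rational complement $K$ of $L$ with $K\cap\mathrm{rec}(R)=\{0\}$ can always be found. Your parenthetical ``one extra trimming half-space may be needed'' would in any case yield $\dim(X)+2$, not $\dim(X)+1$.

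The paper's proof sidesteps the whole issue with a one-line idea you almost wrote down: instead of lifting your simplex $S\subseteq\aff(X)$ to a full-dimensional unbounded prism $C$, use $S$ itself as the capping polyhedron. A rational $\dim(X)$-simplex $\Delta\subseteq\aff(X)$ with $X\subseteq\Delta$ is bounded and has exactly $\dim(X)+1$ facets; the finite set $B:=(\Z^d\setminus X)\cap\Delta$ is cut off by $R$, and since $B$ and $X$ are finite one perturbs each facet inequality of $R$ to a rational one that still separates its share of $B$ from $X$, obtaining a rational $\tilde R$. Then $\tilde R\cap\Delta$ is automatically bounded (no recession-cone analysis needed), rational, contains $X$, excludes $B$, and has at most $\rc(X)+\dim(X)+1$ facets. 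You had all the ingredients; the missed simplification is that a relaxation need not be full-dimensional, so the low-dimensional bounded simplex already does the job.
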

\begin{proof}
    Since $ X $ is finite, there exists a rational simplex $ \Delta \subseteq \R^d $ of dimension $ \dim(X) $ such that
    $ X \subseteq \Delta $. Let $ R $ be any relaxation of $ X $ having $ f $ facets and set $ B := (\Z^d \setminus X)
    \cap \Delta $. Since $ B \cap R = \emptyset $ and $ |B| < \infty $, we are able to slightly perturb the
    facet-defining inequalities of $ R $ in order to obtain a polyhedron $ \tilde{R} $ such that $ B \cap \tilde{R} =
    \emptyset $ and $ \tilde{R} $ is rational. Now $ \tilde{R} \cap \Delta $ is still a relaxation for $ X $, which is
    rational and has at most $ f + (\dim(\Delta) + 1) = f + \dim(X) + 1 $ facets. \qed
\end{proof}

\noindent
However, we are not aware of any polyhedral set $ X $ where $ \rc(X) < \rc_{\Q}(X) $. In fact, we even do not know if $
\rc(\Delta_d) < d + 1 $ holds. Note that any relaxation $ R $ for $ \Delta_d $ that has less than $
d+1 $ facets has to be unbounded. Hence, if $ R $ was rational, it would contain a rational ray and hence infinitely
many integer points, which shows $ \rc_{\Q}(\Delta_d) = d+1 $.

In order to show that any relaxation for $ \{0,1\}^d $ has at least $ d+1 $ facets, we basically used the fact that for
any line $ L(c) := \{ \lambda c \in \R^d : \lambda \in \R \} $ with $ c \in \R^d \setminus \{ \zerovec \} $, the set $
[0,1]^d + L(c) $ contains infinitely many integer points. Unfortunately, such a statement is not true for the general
simplex:

\begin{customexample}
    Consider the $ 5 $-dimensional simplex
    \[
        S := \conv \setdef{ \zerovec, \unitvec_1, \unitvec_2, \unitvec_3, \unitvec_1 + \unitvec_3 + \unitvec_4,
        \unitvec_2 + \unitvec_3 + \unitvec_5 } \subseteq \R^5.
    \]
    The polyhedron $ S + L((0,0,0,1,\sqrt{2})^T) $ does not contain any other integer points than those in $ S $.
    Indeed, let $ p + \lambda \cdot (0,0,0,1,\sqrt{2})^T $ be integral for some $ p \in S $ and some $ \lambda \in \R $.
    It is easy to see that $ p $ has to be one of the vertices of $ S $ and hence $ p $ is integral.
    Thus, $ \lambda $ and $ \lambda \sqrt{2} $ are both integral, which implies $ \lambda = 0 $.
\end{customexample}
Since $ S $ does not contain other integer points than its vertices, we can apply a unimodular transformation and
obtain a direction $ c' \in \R^5 $ such that $ R = \conv(\Delta_5) + L(c') $ is indeed an unbounded relaxation for $
\Delta_5 $. However, it can be verified that $ R $ has more than $ 6 $ facets in this case.

These simple observations lead to the following questions, whose answers are not known to the authors:
\begin{enumerate}
    \item Is it true that $ \rc(\Delta_d) = d+1 $ holds for all $ d $?
    \item Is it true that $ \rc(X) \geq \dim(X)+1 $ holds for all polyhedral (or at least finite) sets $ X \subseteq
    \R^d $?
    \item Is there any polyhedral (or even finite) set $ X \subset \Z^d $ such that $ \rc(X) < \rc_{\Q}(X) $?
\end{enumerate}
We close our paper by giving at least some non-constant lower bound on the relaxation complexity of $ \Delta_d $:
\begin{proposition}
    \label{prop:lower-bound-simplex}
    For all $ k \geq 1 $, we have $ \rc(\Delta_{k!}) \geq k $.
\end{proposition}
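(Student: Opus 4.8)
The plan is to argue by induction on $k$, the inductive step showing that passing from $\Delta_{(k-1)!}$ to $\Delta_{k!}$ costs at least one extra facet. For the base, $k=1$ is trivial (a facet-free relaxation would be all of $\R^1$, hence infinite), $k=2$ likewise (a relaxation with at most one facet is a halfspace or all of $\R^2$, in either case containing infinitely many integer points), and $k=3$ follows from Proposition~\ref{prop:hiding-set} applied to the three-point hiding set $\{\unitvec_a+\unitvec_b-\unitvec_c,\ \unitvec_a+\unitvec_c-\unitvec_b,\ \unitvec_b+\unitvec_c-\unitvec_a\}$ for distinct $a,b,c\in[6]$, whose points are pairwise joined through the vertices $\unitvec_a,\unitvec_b,\unitvec_c\in\conv(\Delta_6)$ (cf.\ Figure~\ref{fig:hiding-set-simplex}).

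For the step, let $k\ge 4$, assume $\rc(\Delta_{(k-1)!})\ge k-1$, and let $R=\setdef{x\in\R^{k!}}[\langle a^j,x\rangle\le\beta_j,\ j\in[m]]$ be a relaxation for $\Delta_{k!}$ with $m=\rc(\Delta_{k!})$ facets. Identify the coordinates of $\R^{k!}$ with $\mathcal{S}_k$ and split them into the $k$ blocks $B_v:=\setdef{\pi\in\mathcal{S}_k}[\pi(1)=v]$, each of size $(k-1)!$, with coordinate subspaces $L_v:=\setdef{x\in\R^{k!}}[x_\pi=0\ \forall\,\pi\notin B_v]\cong\R^{(k-1)!}$. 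Since $R\cap\Z^{k!}=\Delta_{k!}$, the integer points of $R\cap L_v$ inside $L_v$ are exactly $\{\zerovec\}\cup\setdef{\unitvec_\pi}[\pi\in B_v]$, a copy of $\Delta_{(k-1)!}$, and $R\cap L_v$ is carved out there by the (at most $m$) inequalities $\langle a^j|_{B_v},y\rangle\le\beta_j$; hence $R\cap L_v$ is a relaxation for $\Delta_{(k-1)!}$ and $m\ge\rc(\Delta_{(k-1)!})\ge k-1$.

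The heart of the matter — and the step I expect to be hardest — is to exclude $m=k-1$. In that case all $k-1$ inequalities of $R$ would be essential for every $R\cap L_v$. One then wants a contradiction from the ``glue'' between the blocks: the integer points $\chi(B_v)$ (coordinate sum $(k-1)!\ge 2$, hence outside $\conv(\Delta_{k!})$), the cross-block points $\chi(B_v)-\chi(B_{v'})$ (coordinate sum $0$, one negative block), and points $\unitvec_\sigma+\unitvec_\tau-\unitvec_\rho$ with $\sigma,\tau,\rho$ in distinct blocks must all be separated from $\conv(\Delta_{k!})$ by $R$, while, by the argument of Proposition~\ref{prop:hiding-set}, no single inequality of $R$ may separate two such points whose connecting segment meets $\conv(\Delta_{k!})$. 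The plan is to show that honouring all these separation demands ``uses up'' each of the $k-1$ inequalities on pairs or triples of blocks in a way incompatible with its still being essential for the restriction to each of the $k$ blocks, so that some cross-block integer point must in fact survive in $R$. Making this counting precise is exactly where the $k$ blocks (i.e.\ the factorial) genuinely enter, going beyond the easy monotonicity $\rc(\Delta_{d})\le\rc(\Delta_{d+1})$ that the subspace restriction alone yields.

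A possible alternative route is to reformulate Proposition~\ref{prop:hiding-set} as the bound $\rc(\Delta_{k!})\ge\chi(G)$, where $G$ is the ``overlap graph'' on a chosen set of integer points of $\aff(\Delta_{k!})\setminus\conv(\Delta_{k!})$, two points being adjacent iff the segment joining them meets $\conv(\Delta_{k!})$ (the induced $m$-colouring ``which facet cuts this point'' is proper by exactly the proof of Proposition~\ref{prop:hiding-set}). Since hiding sets are cliques of $G$ and have size at most $3$, one needs a non-perfect $G$; the natural candidates are the integer points $\chi(S)-\chi(T)$ with disjoint $S,T\subseteq[k!]$ and $|S|=|T|+1$, for which $G$-adjacency unwinds to the clean relation ``$T\subseteq S'$ and $T'\subseteq S$''. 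In this framing the obstacle becomes purely combinatorial: exhibit such a family on $k!$ coordinates whose overlap graph has chromatic number at least $k$.
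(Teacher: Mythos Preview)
Your base cases are fine and the restriction to a block $L_v$ correctly yields $m\ge k-1$, but the inductive step is genuinely incomplete: you never exclude $m=k-1$, and neither of the two routes you sketch is carried out. Structurally, restricting $R$ to a \emph{coordinate} subspace $L_v$ cannot help here, because such a restriction never forces the facet count to drop; at best it reproduces the easy monotonicity $\rc(\Delta_{(k-1)!})\le\rc(\Delta_{k!})$.

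The missing idea is to restrict instead to (a subspace of) a \emph{facet of $R$}, which does kill one inequality. Assume $m<k$. Then $m<k\le k!=\dim R$, so $R$ is unbounded; pick a recession direction $c\ne\zerovec$. If some vertex $p\in\Delta_{k!}$ lay on no facet of $R$, then $p+B_\varepsilon\subseteq R$ for some $\varepsilon>0$, hence $p+L(c,0,\varepsilon)\subseteq R$, and since $p$ is integral Corollary~\ref{cor:minkowski} would put infinitely many integer points into $R$. So each of the $k!+1$ vertices of $\Delta_{k!}$ lies on some facet of $R$, and by pigeonhole one facet $F$ contains $t\ge\lceil(k!+1)/m\rceil$ of them. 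Their affine span $\mathcal H$ sits inside the supporting hyperplane of $F$; under a unimodular identification these $t$ vertices become $\Delta_{t-1}$, and $R\cap\mathcal H$ is a relaxation of $\Delta_{t-1}$ described by at most $m-1$ of the original inequalities (the one defining $F$ has become an equality). From $m\le k-1$ one checks $t-1\ge(k-1)!$, so by the very monotonicity your $L_v$-argument proves, together with the inductive hypothesis, $m-1\ge\rc(\Delta_{t-1})\ge\rc(\Delta_{(k-1)!})\ge k-1$, a contradiction. This pigeonhole on facets of $R$ is precisely where the factorial enters, and it replaces the cross-block separation count you were looking for.
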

\begin{proof}
    Clearly, the claim is true for $ k = 1 $. Further, it is easy to see that $ \rc(\Delta_m) \leq \rc(\Delta_n) $ holds
    for all $ m \leq n $.

    Thus, by setting $ d(k) := k!-1 $, it suffices to show that even $ \rc(\Delta_{d(k)}) \geq k $ holds for all $ k
    \geq 2 $, which we will show by induction over $ k $. Note that the latter statement is true for $ k = 2 $. Let us
    assume that it is wrong for some $ k \geq 3 $, i.e., there exists a relaxation $ R \subseteq \R^d $ of $
    \Delta_{d(k)} $ that has $ \ell < k $ facets. Since $ \ell < k \leq d(k) = \dim(R) $, $ R $ is unbounded.

    We claim that every integer point $ p $ of $ \Delta_{d(k)} $ must lie in at least one facet of $ R $. Otherwise,
    since $ R $ is unbounded, there exist some $ \varepsilon > 0 $ and $ c \in \R^d \setminus \{\zerovec\} $ such that $
    p + L(c,0,\varepsilon) $ is contained in $ R $. By Corollary \ref{cor:minkowski}, $ L(c,0,\varepsilon) $ contains
    infinitely many integer points and so does $ R $, a contradiction.

    Hence, there must be a facet of $ R $ that contains $ t \geq \frac{d(k)+1}{l} $ vertices $ v_1,\dotsc,v_t $
    of $ \Delta_{d(k)} $. Let $ \mathcal{H} $ be the affine subspace spanned by $ v_1,\dotsc,v_t $ and let $ \varphi \colon
    \mathcal{H} \cap \Z^{d(k)} \to \Z^{t-1} $ be an affine isomorphism mapping $ \{ v_1,\dotsc,v_t \} $ to $
    \Delta_{t-1} $. Extending $ \varphi $ to an affine map from $ \mathcal{H} $ to $ \R^{t-1} $ yields that $ R' :=
    \varphi(R \cap \mathcal{H}) $ is a relaxation of $ \Delta_{t-1} $. Since
    \begin{align*}
        t-1 \geq \frac{d(k)+1}{l} - 1 \geq \frac{d(k)+1}{k} - 1 = \frac{k!-1+1}{k}-1 = (k-1)! - 1 = d(k-1),
    \end{align*}
    by induction, we must have that $ R' $ has at least $ k - 1 $ facets. On the other hand, note that $ R' $ has at
    most $ l-1 $ facets. This implies $ k \leq l $, a contradiction to our assumption. \qed
\end{proof}
 
\makeatletter{}\subsubsection*{Acknowledgements}
We would like to thank Gennadiy Averkov for valuable comments on this work.

\bibliographystyle{spmpsci}

\end{document}